\newcommand{\field}[1]{\mathbb{#1}}
\newcommand{\CC}{\field{C}}
\newcommand{\NN}{\field{N}}
\newcommand{\TT}{\field{T}}
\newcommand{\ZZ}{\field{Z}}
\newcommand{\Aa}{\mathcal{A}}
\newcommand{\Ff}{\mathcal{F}}
\newcommand{\Gg}{\mathcal{G}}
\newcommand{\Hh}{\mathcal{H}}
\newcommand{\Kk}{\mathcal{K}}
\newcommand{\Ll}{\mathcal{L}}
\newcommand{\Mm}{\mathcal{M}}
\newcommand{\Nn}{\mathcal{N}}
\newcommand{\Oo}{\mathcal{O}}
\newcommand{\Tt}{\mathcal{T}}
\newcommand{\Ss}{\mathcal{S}}
\newcommand{\Xx}{\mathcal{X}}
\newcommand{\la}{\langle}
\newcommand{\ra}{\rangle}
\newcommand{\ds}{\displaystyle}
\newtheorem{thm}{Theorem}[section]
\newtheorem{cor}[thm]{Corollary}
\newtheorem{lem}[thm]{Lemma}
\newtheorem{prop}[thm]{Proposition}
\theoremstyle{definition}
\newtheorem{dfn}[thm]{Definition}
\theoremstyle{remark}
\newtheorem{rmk}[thm]{Remark}
\newtheorem{example}[thm]{Example}
\newtheorem*{examples*}{Examples}
\numberwithin{equation}{subsection}
\numberwithin{equation}{subsection}
\title{ Higman-Thompson groups from self-similar groupoid actions }
\author{Valentin Deaconu}
\address{Valentin Deaconu \\ Department of Mathematics (084)\\ University
of Nevada\\ Reno NV 89557-0084\\ USA} \email{vdeaconu@unr.edu}
\keywords{Higman-Thompson group; Self-similar action; groupoid homology; $C^*$-correspondence;  Cuntz-Pimsner algebra.}
\subjclass{Primary 46L05.}
\begin{document}
\begin{abstract}
Given a self-similar groupoid action $(G,E)$ on a finite directed graph, we prove some properties of the corresponding ample groupoid of germs $\Gg(G,E)$. We study  the analogue of the Higman-Thompson group associated to $(G,E)$ using $G$-tables and relate it to the topological full group of $\Gg(G,E)$, which is isomorphic to a subgroup of unitaries in the algebra $C^*(G,E)$. 
After recalling some  concepts in groupoid homology, we discuss the Matui's AH-conjecture for $\Gg(G,E)$ in some particular cases.

\end{abstract}
\maketitle
\section{introduction}

Self-similar group actions are defined using subgroups of the automorphism group of a rooted tree which is  viewed as the path space of a graph with one vertex and $n$ edges. Given a finite directed graph $E$ with no sources, the corresponding path space gives rise to a union of trees (or forest) $T_E$, and it is natural to consider self-similar actions of subgroupoids $G$ of PIso$(T_E)$, the set of partial isomorphsims of $T_E$. 

Given such a self-similar action $(G,E)$, we study the properties of the generalized Exel-Pardo  ample groupoid $\Gg(G,E)$. Since the source map of the graph is in general not $G$-equivariant, this  determines certain modifications in the proofs  of similar results from \cite{EP}, where $G$ is a group acting on $E$ by graph automorphisms. We study the topological full group $\ldbrack\Gg(G,E)\rdbrack$ and we  relate it  to the Higman-Thompson type group defined by $G$-tables for the action $(G,E)$. 
After recalling some results  about groupoid homology, we discuss the Matui's AH-conjecture for $\Gg(G,E)$ and related issues.

We begin  by recalling the definition of a self-similar groupoid action $(G,E)$ from \cite{LRRW} and give several examples, illustrating some differences with the case when $G$ is a group acting by automorphisms of the graph $E$. We  recall some facts about the structure of the associated $C^*$-algebra  $C^*(G,E)$ defined using a $C^*$-correspondence over $C^*(G)$ and show that it contains a copy of the graph algebra $C^*(E)$. We review the definition of the ample groupoid of germs $\Gg(G,E)$, inspired from the Exel-Pardo groupoid from \cite{EP} and describe when this groupoid is minimal and effective. 

We then define the Higman-Thompson group $V_E(G)$ associated to $(G,E)$ using $G$-tables and describe a faithful unitary representation of $V_E(G)$ in the $C^*$-algebra $C^*(G,E)$. Topological full groups associated to dynamical systems and to \' etale groupoids are complete invariants for continuous orbit equivalence and groupoid isomorphism. They  provide means of constructing new countable groups with interesting properties. 

After recalling the definion of homology of \' etale groupoids  introduced by Crainic and Moerdijk in \cite{CM} and Matui's AH-conjecture, we specialize to the groupoid $\Gg(G,E)$ associated to a self-similar action $(G, E)$ and its topological full group $\ldbrack\Gg(G,E)\rdbrack$ which is isomorphic to $V_E(G)$. The kernel $\Hh(G,E)$ of the groupoid cocycle $\rho:\Gg(G,E)\to \ZZ$ plays an important r\^ole  in the computation of the groupoid homology $H_i(\Gg(G,E))$, particularly when $G$ is transitive. We prove that in some cases, the index map $I_\Hh:\ldbrack\Hh(G,E)\rdbrack\to H_1(\Hh(G,E))$ is surjective. We also show that in a particular example, the kernel of the index map $\ldbrack\Gg(G,E)\rdbrack\to H_1(\Gg(G,E))$ is not generated by transpositions.

\bigskip

\bigskip

\section{Self-similar actions of groupoids on graphs}
\bigskip

Let $E = (E^0, E^1, r, s)$ be a finite directed graph  with no sources. For $k \ge 2$, define  the set of paths of length $k$ in $E$ as
\[E^k = \{e_1e_2\cdots e_k : e_i \in E^1,\; r(e_{i+1}) = s(e_i)\}.\]
The maps $r,s$ are naturally extended to $E^k$ by taking
\[r(e_1e_2\cdots e_k)=r(e_1),\;\; s(e_1e_2\cdots e_k)=s(e_k).\]
We denote by $E^* :=\bigcup_{ k\ge 0} E^k$  the space of finite paths (including vertices) and  by $E^\infty$ the infinite path space of $E$ with the usual topology given by the cylinder sets $Z(\alpha)=\{\alpha\xi:\xi\in E^\infty\}$ for $\alpha\in E^*$. 
 Note that $E^\infty$ is a Cantor space precisely when $E$ satisfies condition (L).
 
 We can visualize the set  $E^*$ as indexing the vertices of a union of rooted trees  or forest $T_E$   given by $T_E^0 =E^*$ and with edges \[T_E^1 =\{(\mu, \mu e) :\mu\in E^*, e\in E^1\; \text{and}\; s(\mu)=r(e)\}.\]
 
 \begin{example}\label{ex}
 
 For the graph
 \[\begin{tikzpicture}[shorten >=0.4pt,>=stealth, semithick]
\renewcommand{\ss}{\scriptstyle}
\node[inner sep=1.0pt, circle, fill=black]  (u) at (-2,0) {};
\node[below] at (u.south)  {$\ss u$};
\node[inner sep=1.0pt, circle, fill=black]  (v) at (0,0) {};
\node[below] at (v.south)  {$\ss v$};
\node[inner sep=1.0pt, circle, fill=black]  (w) at (2,0) {};
\node[below] at (w.south)  {$\ss w$};

\draw[->, blue] (u) to [out=45, in=135]  (v);
\node at (-1,0.7){$\ss e_2$};
\draw[->, blue] (v) to [out=-135, in=-45]  (u);
\node at (-1,-0.7) {$\ss e_3$};
\draw[->, blue] (v) to [out=45, in=135]  (w);
\node at (1,0.7){$\ss e_4$};
\draw[->, blue] (v) to (w);
\node at (1,0.1){$\ss e_5$};
\draw[->, blue] (w) to [out=-135, in=-45]  (v);
\node at (1,-0.7) {$\ss e_6$};

\draw[->, blue] (u) .. controls (-3.5,1.5) and (-3.5, -1.5) .. (u);
\node at (-3.35,0) {$\ss e_1$};

\end{tikzpicture}
\]

the forest $T_E$ looks like

\[
\begin{tikzpicture}[shorten >=0.4pt,>=stealth, semithick]
\renewcommand{\ss}{\scriptstyle}
\node[inner sep=1.0pt, circle, fill=black]  (u) at (-4,4) {};
\node[above] at (u.north)  {$\ss u$};
\node[inner sep=1.0pt, circle, fill=black]  (v) at (0,4) {};
\node[above] at (v.north)  {$\ss v$};
\node[inner sep=1.0pt, circle, fill=black]  (w) at (4,4) {};
\node[above] at (w.north)  {$\ss w$};

\node[inner sep=1.0pt, circle, fill=black] (e1) at (-5,3){};
\node[left] at (-5,3)  {$\ss e_1$};
\node[inner sep=1.0pt, circle, fill=black] (e3) at (-3,3){};
\node[right] at (-3,3)  {$\ss e_3$};
\draw[->, blue] (e1) to  (u);
\draw[-> , blue] (e3) to  (u);
\node[inner sep=1.0pt, circle, fill=black] (e11) at (-5.4,2){};
\node[inner sep=1.0pt, circle, fill=black] (e13) at (-4.6,2){};
\node[below] at (-5.4,2)  {$\ss e_1e_1$};
\node[below] at (-5.4,2) {$\vdots$};
\draw[-> , blue] (e11) to   (e1);
\node[below] at (-4.6,2)  {$\ss e_1e_3$};
\node[below] at (-4.6,2) {$\vdots$};
\draw[-> , blue] (e13) to  (e1);
\node[inner sep=1.0pt, circle, fill=black] (e32) at (-3.4,2){};
\node[inner sep=1.0pt, circle, fill=black] (e36) at (-2.6,2){};
\node[below] at (-3.4,2) {$\ss e_3e_2$};
\node[below] at (-3.4,2) {$\vdots$};
\node[below] at (-2.6,2) {$\ss e_3e_6$};
\node[below] at (-2.6,2) {$\vdots$};
\draw[-> , blue] (e32) to   (e3);
\draw[-> , blue] (e36) to   (e3);

\node[inner sep=1.0pt, circle, fill=black] (e2) at (-1,3){};
\node[left] at (-1,3)  {$\ss e_2$};

\node[inner sep=1.0pt, circle, fill=black] (e6) at (1,3){};
\node[right] at (1,3)  {$\ss e_6$};
\draw[-> , blue] (e2) to  (v);
\draw[-> , blue] (e6) to  (v);
\node[inner sep=1.0pt, circle, fill=black] (e21) at (-1.4,2){};
\node[inner sep=1.0pt, circle, fill=black] (e23) at (-0.6,2){};

\draw[-> , blue] (e21) to   (e2);

\draw[-> , blue] (e23) to  (e2);
\node[inner sep=1.0pt, circle, fill=black] (e64) at (0.6,2){};
\node[inner sep=1.0pt, circle, fill=black] (e65) at (1.4,2){};
\draw[-> , blue] (e64) to   (e6);
\draw[-> , blue] (e65) to   (e6);
\node[below] at (-1.4,2)  {$\ss e_2e_1$};
\node[below] at (-1.4,2) {$\vdots$};
\node[below] at (-0.6,2)  {$\ss e_2e_3$};
\node[below] at (-0.6,2) {$\vdots$};
\node[below] at (0.6,2) {$\ss e_6e_4$};
\node[below] at (0.6,2) {$\vdots$};
\node[below] at (1.4,2) {$\ss e_6e_5$};
\node[below] at (1.4,2) {$\vdots$};

\node[inner sep=1.0pt, circle, fill=black] (e4) at (3,3){};
\node[left] at (3,3)  {$\ss e_4$};

\node[inner sep=1.0pt, circle, fill=black] (e5) at (5,3){};
\node[right] at (5,3)  {$\ss e_5$};
\draw[-> , blue] (e4) to  (w);
\draw[-> , blue] (e5) to  (w);
\node[inner sep=1.0pt, circle, fill=black] (e42) at (2.6,2){};
\node[inner sep=1.0pt, circle, fill=black] (e46) at (3.4,2){};

\draw[-> , blue] (e42) to   (e4);

\draw[-> , blue] (e46) to  (e4);
\node[inner sep=1.0pt, circle, fill=black] (e52) at (4.6,2){};
\node[inner sep=1.0pt, circle, fill=black] (e56) at (5.4,2){};
\draw[-> , blue] (e52) to   (e5);
\draw[-> , blue] (e56) to   (e5);
\node[below] at (2.6,2)  {$\ss e_4e_2$};
\node[below] at (2.6,2) {$\vdots$};
\node[below] at (3.4,2)  {$\ss e_4e_6$};
\node[below] at (3.4,2) {$\vdots$};
\node[below] at (4.6,2) {$\ss e_5e_2$};
\node[below] at (4.6,2) {$\vdots$};
\node[below] at (5.4,2) {$\ss e_5e_6$};
\node[below] at (5.4,2) {$\vdots$};

\end{tikzpicture}
\]

 \end{example}

Recall that a partial isomorphism of the forest $T_E$ corresponding to a given directed graph $E$  consists of a pair $(v, w) \in E^0 \times E^0$ and a bijection $g : vE^* \to wE^*$ such that
\begin{itemize}

\item $g|_{vE^k} : vE^k \to wE^k$ is bijective for all $k\ge 1$.

\item $g(\mu e)\in g(\mu)E^1$ for $\mu\in vE^*$  and $e\in E^1$ with $r(e)=s(\mu)$.
 
 \end{itemize}
 Here $vE^k$ denotes the set of paths $\mu\in E^k$ with $r(\mu)=v$ and similarly for $vE^*$.
The set of partial isomorphisms of $T_E$ forms a discrete groupoid PIso$(T_E)$ with unit space $E^0$. The identity morphisms are  $id_v : vE^* \to vE^*$, the inverse of $g : vE^* \to wE^*$ is $g^{-1} : wE^* \to vE^*$, and the  multiplication is composition. We often identify $v\in E^0$ with $id_v\in$ PIso$(T_E)$. 

In Example \ref{ex}, the sum of the entries in each row of the graph adjacency matrix $\ds A_E=\left(\begin{array}{ccc}1&1&0\\1&0&1\\0&2&0\end{array}\right)$ is the same, so the set of edges ending at each vertex has the same cardinality, but in general it could happen that there is no bijection $g : vE^* \to wE^*$ as above for $v\neq w$, see Example \ref{ka}. In that case, PIso$(T_E)$ is a group bundle. If $E$ has a single vertex, then PIso$(T_E)=$Aut$(T_E)$ is a group.

We are interested in self-similar actions of groupoids on the path space of directed graphs $E$ as introduced and studied in \cite{LRRW}. 

A groupoid $G$ is a  small category with inverses. We will use $d$ and $t$ for the domain and target maps $d,t:G \to G^{(0)}$ to distinguish them from the source and range maps $s,r:E^1\to E^0$ on directed graphs. For $u,v \in G^{(0)}$, we write \[G_u =\{g\in G: d(g)=u\},\;\;G^v =\{g\in G: t(g)=v\},\;\; G_u^v=G_u\cap G^v.\]
The set of composable pairs is denoted $G^{(2)}$.

An \'etale groupoid is a topological groupoid where the target map $t$ (and necessarily the
domain map  $d$) is a local homeomorphism (as a map from $G$ to $G$). The unit space $G^{(0)}$ of an \'etale groupoid is always an open subset of $G$.

\begin{dfn} Let $G$ be an \' etale groupoid. A bisection is an open subset $U\subseteq  G$ such that $d$ and $t$ are both injective when restricted to $U$.
\end{dfn}

Two units $x,y \in G^{(0)}$ belong to the same $G$-orbit if there exists $g \in G$ such that $d(g) = x$ and $t(g) = y$. We denote by $G(x)$ the $G$-orbit of $x$. When every $G$-orbit is dense in $G^{(0)}$, the groupoid $G$ is called minimal.

The isotropy group of a unit $x\in G^{(0)}$ is the group \[G_x^x :=\{g\in G\; | \; d(g)=t(g)=x\},\] and the isotropy bundle is
\[G' :=\{g\in G\; | \; d(g)=t(g)\}= \bigcup_{x\in G^{(0)}} G_x^x.\]
A groupoid $G$ is said to be principal if all isotropy groups are trivial, or equivalently, $G' = G^{(0)}$. We say that $G$ is effective if the interior of $G'$ equals $G^{(0)}$.

\begin{dfn}\label{ss} Let $E$ be a finite directed graph with no sources, and let $G$ be a discrete groupoid with unit space $G^{(0)}=E^0$. A {\em self-similar action} $(G,E)$ of  $G$  on the path space of $E$ is given by a faithful homomorphism $G\to$ PIso$(T_E)$ such that for every $g\in G$ and $e\in d(g)E^1$ there exists a unique $h\in G$ denoted also by $g|_e$ and called the restriction of $g$ to $e$ such that
\[g\cdot(e\mu)=(g\cdot e)(h\cdot \mu)\;\;\text{for all}\;\; \mu\in s(e)E^*.\]

\end{dfn}
\begin{rmk}
We have \[d(g|_e)=s(e),\; t(g|_e)=s(g\cdot e)=g|_e\cdot s(e),\;  r(g\cdot e)=g\cdot r(e).\] This can be visualized as

\[\begin{tikzpicture}[shorten >=0.4pt,>=stealth, thick]
\node[inner sep=1.0pt, circle, fill=black]  (u) at (-1,-1) {};
\node[inner sep=1.0pt, circle, fill=black]  (v) at (1,-1) {};
\node[inner sep=1.0pt, circle, fill=black]  (w) at (-1,1) {};
\node[inner sep=1.0pt, circle, fill=black]  (x) at (1,1) {};
\draw[-> , blue] (v) to   (u);
\draw[-> , red] (u) to   (w);
\draw[-> , red] (v) to   (x);
\draw[-> , blue] (x) to   (w);
\node[below] at (0,-1)  {$e$};
\node[above] at (0,1)  {$g\cdot e$};
\node[left] at (-1,-1)  {$r(e)=d(g)$};
\node[left] at (-1,1)  {$g\cdot r(e)$};
\node[left] at (-1,0)  {$g$};
\node[right] at (1,-1)  {$s(e)$};
\node[right] at (1,1)  {$s(g\cdot e)$};
\node[right] at (1,0)  {$g|_e$};
\end{tikzpicture}
\]

In particular, in general $s(g\cdot e)\neq g\cdot s(e)$, i.e. the source map is not $G$-equivariant as it is assumed for group actions in \cite{EP}. It is shown in Appendix A of \cite{LRRW} that a self-similar group action $(H,E)$ as in \cite{EP} determines a self-similar groupoid action $(H\times E^0, E)$ as in Definition \ref{ss}, where $H\times E^0$ is the action groupoid of the group $H$. But not any self-similar groupoid action comes from a self-similar group action.

It is possible that $g|_e=g$ for all $e\in d(g)E^1$, in which case \[g\cdot(e_1e_2\cdots e_n)=(g\cdot e_1)\cdots(g\cdot e_n).\]
\end{rmk}

 A self-similar groupoid action $(G,E)$  extends to an action of $G$ on the path space $E^*$ and  determines an action of $G$ on the graph $T_E$, in the sense that $G$ acts on both the vertex space $T_E^0$ and the edge space $T_E^1$ and intertwines the range and the source maps of $T_E$, see Definition 4.1 in \cite{De}.
  A self-similar groupoid action also extends to an action of $G$ on $E^\infty$ such that $g\cdot \mu=\eta$ if for all $n$ we have $g\cdot(\mu_1\cdots\mu_n)=\eta_1\cdots \eta_n$.

The faithfulness condition ensures that for each $g \in G$ and $\mu\in E^*$ with $d(g) = r(\mu)$, there is a unique element $g|_\mu\in G$ satisfying 
\[g\cdot(\mu\nu)=(g\cdot\mu)(g|_\mu\cdot \nu)\;\text{ for all}\; \nu\in s(\mu)E^*.\] 
By Proposition 3.6 of \cite{LRRW}, self-similar groupoid actions have the following properties: for $g, h \in G, \mu\in d(g)E^*$, and $\nu\in s(\mu)E^*$,

(1) $g|_{\mu\nu} = (g|_\mu)|_\nu$;

(2) $id_{r(\mu)}|_{\mu} = id_{s(\mu)}$; 

(3) if $(h, g)\in G^{(2)}$, then $(h|_{g\cdot\mu},g|_\mu)\in G^{(2)}$ 
and $(hg)|_\mu = (h|_{g\cdot\mu})(g|_\mu)$; 

(4) $g^{-1}|_\mu=(g|_{g^{-1}\cdot\mu})^{-1}$.
\begin{dfn}\label{trans}
A self-similar action $(G,E)$ is said to be level transitive if the induced action on $E^*$ is transitive on each $E^n$. The action is level transitive iff it is minimal on the infinite path space $E^\infty$. 

\end{dfn}

\begin{example}\label{forest} Let $E$ be the graph from Example \ref{ex}
\[\begin{tikzpicture}[shorten >=0.4pt,>=stealth, semithick]
\renewcommand{\ss}{\scriptstyle}
\node[inner sep=1.0pt, circle, fill=black]  (u) at (-2,0) {};
\node[below] at (u.south)  {$\ss u$};
\node[inner sep=1.0pt, circle, fill=black]  (v) at (0,0) {};
\node[below] at (v.south)  {$\ss v$};
\node[inner sep=1.0pt, circle, fill=black]  (w) at (2,0) {};
\node[below] at (w.south)  {$\ss w$};

\draw[->, blue] (u) to [out=45, in=135]  (v);
\node at (-1,0.7){$\ss e_2$};
\draw[->, blue] (v) to [out=-135, in=-45]  (u);
\node at (-1,-0.7) {$\ss e_3$};
\draw[->, blue] (v) to [out=45, in=135]  (w);
\node at (1,0.7){$\ss e_4$};
\draw[->, blue] (v) to (w);
\node at (1,0.1){$\ss e_5$};
\draw[->, blue] (w) to [out=-135, in=-45]  (v);
\node at (1,-0.7) {$\ss e_6$};

\draw[->, blue] (u) .. controls (-3.5,1.5) and (-3.5, -1.5) .. (u);
\node at (-3.35,0) {$\ss e_1$};

\end{tikzpicture}
\]
with $E^0=\{u,v,w\}, E^1=\{e_1, e_2, e_3, e_4, e_5, e_6\}$. Consider the groupoid $G$ with unit space $G^{(0)}=\{u,v,w\}$ and generators $a,b,c$ such that $d(a)=u, \; t(a)=v=d(b)=t(c), \; t(b)=w=d(c)$ as in the picture

\[\begin{tikzpicture}[shorten >=0.4pt,>=stealth, semithick]
\renewcommand{\ss}{\scriptstyle}
\node[inner sep=1.0pt, circle, fill=black]  (u) at (-2,0) {};
\node[below] at (u.south)  {$\ss u$};
\node[inner sep=1.0pt, circle, fill=black]  (v) at (0,0) {};
\node[below] at (v.south)  {$\ss v$};
\node[inner sep=1.0pt, circle, fill=black]  (w) at (2,0) {};
\node[below] at (w.south)  {$\ss w$};
\draw[->, red] (u) to (v);
 \node at (-1,0.25) {$\ss a$}; 

\draw[->, red] (v) to [out=45, in=135]  (w);
\node at (1,0.7){$\ss b$};
\draw[->, red] (w) to [out=-135, in=-45]  (v);
\node at (1,-0.7) {$\ss c$};

\end{tikzpicture}
\]
Define the self-similar action $(G,E)$ given by
 \[a\cdot e_1=e_2,\;\; a|_{e_1}=u,\;\; a\cdot e_3=e_6,\;\; a|_{e_3}=b,\]
\[b\cdot e_2=e_5,\;\; b|_{e_2}=a, \;\; b\cdot e_6=e_4, \;\; b|_{e_6}=c,\] 
\[c\cdot e_4=e_2,\;\; c|_{e_4}=a^{-1},\;\; c\cdot e_5=e_6,\;\; c|_{e_5}=b.\]
The actions of $a^{-1}, b^{-1}, c^{-1}$ and their restrictions are then uniquely determined:
\[a^{-1}\cdot e_2=e_1,\;\; a^{-1}|_{e_2}=u,\;\; a^{-1}\cdot e_6=e_3,\;\; a^{-1}|_{e_6}=b^{-1},\]
\[b^{-1}\cdot e_5=e_2,\;\;b^{-1}|_{e_5}=a^{-1},\;\; b^{-1}\cdot e_4=e_6,\;\; b^{-1}|_{e_4}=c^{-1},\]
\[c^{-1}\cdot e_2=e_4,\;\; c^{-1}|_{e_2}=a,\;\; c^{-1}\cdot e_6=e_5,\;\;c^{-1}|_{e_6}=b^{-1}.\]  
The actions of $u,v,w$ and their restrictions are given by
\[u\cdot e_1=e_1,\; u|_{e_1}=u, \; u\cdot e_3=e_3, \; u|_{e_3}=v, \;  v\cdot e_2=e_2, \; v|_{e_2}=u,\]
\[ v\cdot e_6=e_6, \; v|_{e_6}=w,\;  w\cdot e_4=e_4, \; w|_{e_4}=v,\;  w\cdot e_5=e_5, \; w|_{e_5}=v.\]
We can also characterize the self-similar action $(G,E)$ by the formulas
\[a\cdot e_1\mu=e_2\mu, \;  \; a\cdot e_3\mu =e_6(b\cdot\mu),\; \; b\cdot(e_2\mu)=e_5(a\cdot\mu), \]\[ b\cdot e_6\mu=e_4(c\cdot\mu),\;\; c\cdot e_4\mu=e_2(a^{-1}\cdot\mu), \;\; c\cdot e_5\mu=e_6(b\cdot\mu)\]
where $\mu\in E^*$, and these  determine uniquely an action of  $G$ on the graph $T_E$. 

Note that the action of $G$ is level transitive. It can be shown (see 
\cite{D}) that $G$ is a transitive groupoid with isotropy $\ZZ$ and therefore $C^*(G)\cong M_3 \otimes C(\TT)$. 
\end{example}

We observe that the graph $E$ itself does not have many symmetries. If a group acts on $E$ by preserving the range and source maps, then all vertices must be fixed, and the action should also fix all edges except that it could interchange $e_4$ and $e_5$, so Aut$(E)\cong \ZZ_2$. The set of self-similar groupoid actions $(G,E)$ is much richer.

\begin{example} Let $E$ be the graph
\vspace{-20mm} 
\[
\begin{tikzpicture}[shorten >=0.4pt,>=stealth, semithick]
\renewcommand{\ss}{\scriptstyle}
\node[inner sep=1.0pt, circle, fill=black]  (u) at (-1.5,0) {};
\node[left] at (u.west)  {$\ss u$};
\node[inner sep=1.0pt, circle, fill=black]  (v) at (1.5,0) {};
\node[right] at (v.east)  {$\ss v$};
\draw[->, blue] (u) to (v);
\node at (0,0.2){$\ss e_3$};
\draw[->, blue] (u) .. controls (-4.5,3.5) and (-4.5, -3.5) .. (u);
\node at (-3.9,0) {$\ss e_1$};
\draw[->, blue] (u) .. controls (-3.5,1.5) and (-3.5, -1.5) .. (u);
\node at (-3.2,0) {$\ss e_2$};
\draw[->, blue] (v) .. controls (3.5,1.5) and (3.5, -1.5) .. (v);
\node at (3.2,0) {$\ss e_4$};
\end{tikzpicture}
\vspace{-20mm}
\]
with $E^0=\{u,v\}, E^1=\{e_1, e_2, e_3, e_4\}$ and forest $T_E$
\[
\begin{tikzpicture}[shorten >=0.4pt,>=stealth, semithick]
\renewcommand{\ss}{\scriptstyle}
\node[inner sep=1.0pt, circle, fill=black]  (u) at (-3,4) {};
\node[above] at (u.north)  {$\ss u$};
\node[inner sep=1.0pt, circle, fill=black]  (v) at (3,4) {};
\node[above] at (v.north)  {$\ss v$};

\node[inner sep=1.0pt, circle, fill=black] (e1) at (-4,3){};
\node[left] at (-4,3)  {$\ss e_1$};
\node[inner sep=1.0pt, circle, fill=black] (e2) at (-2,3){};
\node[right] at (-2,3)  {$\ss e_2$};
\draw[->, blue] (e1) to  (u);
\draw[-> , blue] (e2) to  (u);
\node[inner sep=1.0pt, circle, fill=black] (e11) at (-4.5,2){};
\node[inner sep=1.0pt, circle, fill=black] (e12) at (-3.5,2){};
\node[below] at (-4.5,2)  {$\ss e_1e_1$};
\node[below] at (-4.5,2) {$\vdots$};
\draw[-> , blue] (e11) to   (e1);
\node[below] at (-3.5,2)  {$\ss e_1e_2$};
\node[below] at (-3.5,2) {$\vdots$};
\draw[-> , blue] (e11) to  (e1);
\draw[-> , blue] (e12) to  (e1);
\node[inner sep=1.0pt, circle, fill=black] (e21) at (-2.5,2){};
\node[inner sep=1.0pt, circle, fill=black] (e22) at (-1.5,2){};
\node[below] at (-1.5,2) {$\ss e_2e_2$};
\node[below] at (-1.5,2) {$\vdots$};
\node[below] at (-2.5,2) {$\ss e_2e_1$};
\node[below] at (-2.5,2) {$\vdots$};
\draw[-> , blue] (e21) to  (e2);
\draw[-> , blue] (e22) to  (e2);

\node[inner sep=1.0pt, circle, fill=black] (e3) at (2,3){};
\node[left] at (2,3)  {$\ss e_3$};

\node[inner sep=1.0pt, circle, fill=black] (e4) at (4,3){};
\node[right] at (4,3)  {$\ss e_4$};
\draw[-> , blue] (e3) to  (v);
\draw[-> , blue] (e4) to  (v);
\node[inner sep=1.0pt, circle, fill=black] (e31) at (1.5,2){};
\node[below] at (1.5,2)  {$\ss e_3e_1$};
\node[below] at (1.5,2) {$\vdots$};
\node[inner sep=1.0pt, circle, fill=black] (e32) at (2.5,2){};
\node[below] at (2.5,2)  {$\ss e_3e_2$};
\node[below] at (2.5,2) {$\vdots$};
\draw[-> , blue] (e31) to   (e3);

\draw[-> , blue] (e32) to  (e3);
\node[inner sep=1.0pt, circle, fill=black] (e43) at (3.5,2){};
\node[below] at (3.5,2)  {$\ss e_4e_3$};
\node[below] at (3.5,2) {$\vdots$};
\node[inner sep=1.0pt, circle, fill=black] (e44) at (4.5,2){};
\node[below] at (4.5,2)  {$\ss e_4e_4$};
\node[below] at (4.5,2) {$\vdots$};
\draw[-> , blue] (e43) to   (e4);
\draw[-> , blue] (e44) to   (e4);
\end{tikzpicture}
\]

Consider the groupoid $G=\la a,b,c\ra$ with $d(a)=d(b)=d(c)=u= t(a)=t(b)$ and $ t(c)=v$ as in the picture
\vspace{-10mm}
\[\begin{tikzpicture}[shorten >=0.4pt,>=stealth, semithick]
\renewcommand{\ss}{\scriptstyle}
\node[inner sep=1.0pt, circle, fill=black]  (u) at (-2,0) {};
\node[below] at (u.south)  {$\ss u$};
\node[inner sep=1.0pt, circle, fill=black]  (v) at (1,0) {};
\node[below] at (v.south)  {$\ss v$};

\draw[->, red] (u) to  (v);
\node at (-0.5,0.2){$\ss c$};
\draw[->, red]  (u) .. controls (-3.5,-1) and (-3.5, 1) .. (u);
\node at (-3.3,0) {$\ss a$};
\draw[->, red] (u) .. controls (-4.5,-2) and (-4.5, 2) .. (u);
\node at (-4.1,0){$\ss b$};

\end{tikzpicture}
\vspace{-10mm}\]
and the self-similar action 

\[a\cdot e_1=e_2,\; a|_{e_1}=b, \; a\cdot e_2=e_1, \; a|_{e_2}=a,\]
\[b\cdot e_1=e_1, \; b|_{e_1}=b, \; b\cdot e_2=e_2, \; b|_{e_2}=a,\]
\[c\cdot e_1=e_3, \;  c|_{e_1}=a, \; c\cdot e_2=e_4, \; c|_{e_2}=c.\]

Then $G_u^u=\la a,b\ra$ is isomorphic to the lamplighter group $L=(\ZZ_2)^\ZZ\rtimes \ZZ$ and $C^*(G)\cong M_2\otimes C^*(L)$.

\end{example}

\begin{example}\label{ka} 

Let $E$ be the graph 

\[
\vspace{-10mm}
\begin{tikzpicture}[shorten >=0.4pt,>=stealth, semithick]
\renewcommand{\ss}{\scriptstyle}
\node[inner sep=1.0pt, circle, fill=black]  (u) at (-1.5,0) {};
\node[left] at (u.west)  {$\ss u$};

\node[inner sep=1.0pt, circle, fill=black]  (v) at (1.5,0) {};
\node[right] at (v.east)  {$\ss v$};

\draw[->, blue] (u) to [out=30, in=150]  (v);
\node at (0,0.2){$\ss e_4$};
\node at (0,1){$\ss e_5$};
\node at (0,-0.9){$\ss e_3$};
\draw[->, blue] (v) to [out=-135, in=-45]  (u);
\draw[->, blue] (u) to [out=60, in=120]  (v);
\draw[->, blue] (u) .. controls (-4.5,2.5) and (-4.5, -2.5) .. (u);
\node at (-4,0) {$\ss e_1$};
\draw[->, blue] (u) .. controls (-3.5,1.5) and (-3.5, -1.5) .. (u);
\node at (-3.2,0) {$\ss e_2$};
\draw[->, blue] (v) .. controls (3.5,1.5) and (3.5, -1.5) .. (v);
\draw[->, blue] (v) .. controls (4.5,2.5) and (4.5, -2.5) .. (v);
\node at (3.2,0) {$\ss e_6$};
\node at (4,0) {$\ss e_7$};
\end{tikzpicture}
\] 
with $E^0=\{u,v\}, E^1=\{e_1, e_2, e_3, e_4, e_5, e_6, e_7\}$ and forest $T_E$

\[
\begin{tikzpicture}[shorten >=0.4pt,>=stealth, semithick]
\renewcommand{\ss}{\scriptstyle}
\node[inner sep=1.0pt, circle, fill=black]  (u) at (-3,4) {};
\node[above] at (u.north)  {$\ss u$};
\node[inner sep=1.0pt, circle, fill=black]  (v) at (4.5,4) {};
\node[above] at (v.north)  {$\ss v$};

\node[inner sep=1.0pt, circle, fill=black] (e1) at (-5,3){};
\node[left] at (-5,3)  {$\ss e_1$};
\node[inner sep=1.0pt, circle, fill=black] (e2) at (-3,3){};
\node[right] at (-3,3)  {$\ss e_2$};
\node[inner sep=1.0pt, circle, fill=black] (e3) at (-0.8,3){};
\node[right] at (-0.7,3)  {$\ss e_3$};
\draw[->, blue] (e1) to  (u);
\draw[-> , blue] (e2) to  (u);
\draw[-> , blue] (e3) to  (u);
\node[inner sep=1.0pt, circle, fill=black] (e11) at (-5.7,2){};
\node[inner sep=1.0pt, circle, fill=black] (e12) at (-5,2){};
\node[inner sep=1.0pt, circle, fill=black] (e13) at (-4.3,2){};

\node[below] at (-5.7,2)  {$\ss e_1e_1$};
\node[below] at (-5.7,2) {$\vdots$};
\draw[-> , blue] (e11) to   (e1);
\node[below] at (-5,2)  {$\ss e_1e_2$};
\node[below] at (-5,2) {$\vdots$};
\node[below] at (-4.3,2) {$\vdots$};
\node[below] at (-4.3,2)  {$\ss e_1e_3$};
\draw[-> , blue] (e13) to  (e1);
\draw[-> , blue] (e12) to  (e1);
\node[inner sep=1.0pt, circle, fill=black] (e21) at (-3.7,2){};
\node[inner sep=1.0pt, circle, fill=black] (e22) at (-3,2){};
\node[inner sep=1.0pt, circle, fill=black] (e23) at (-2.3,2){};
\node[below] at (-3.7,2) {$\ss e_2e_1$};
\node[below] at (-3.7,2) {$\vdots$};
\node[below] at (-2.3,2) {$\vdots$};
\node[below] at (-3,2) {$\ss e_2e_2$};
\node[below] at (-2.3,2) {$\ss e_2e_3$};
\node[below] at (-3,2) {$\vdots$};
\draw[-> , blue] (e21) to  (e2);
\draw[-> , blue] (e22) to  (e2);
\draw[-> , blue] (e23) to  (e2);
\node[inner sep=1.0pt, circle, fill=black] (e34) at (-1.7,2){};
\node[inner sep=1.0pt, circle, fill=black] (e35) at (-1.1,2){};
\node[inner sep=1.0pt, circle, fill=black] (e36) at (-0.5,2){};
\node[inner sep=1.0pt, circle, fill=black] (e37) at (0.1,2){};

\draw[-> , blue] (e34) to   (e3);
\draw[-> , blue] (e35) to   (e3);
\draw[-> , blue] (e36) to   (e3);
\draw[-> , blue] (e37) to   (e3);

\node[below] at (-1.7,2)  {$\ss e_3e_4$};
\node[below] at (-1.1,2)  {$\ss e_3e_5$};
\node[below] at (-0.5,2)  {$\ss e_3e_6$};
\node[below] at (0.1,2)  {$\ss e_3e_7$};

\node[below] at (-1.7,2) {$\vdots$};
\node[below] at (-1.1,2) {$\vdots$};
\node[below] at (-0.5,2) {$\vdots$};
\node[below] at (0.1,2) {$\vdots$};

\node[inner sep=1.0pt, circle, fill=black] (e4) at (1.5,3){};
\node[left] at (1.5,3)  {$\ss e_4$};

\node[inner sep=1.0pt, circle, fill=black] (e5) at (3.5,3){};
\node[right] at (3.5,3)  {$\ss e_5$};
\node[inner sep=1.0pt, circle, fill=black] (e6) at (5.7,3){};
\node[right] at (5.7,3)  {$\ss e_6$};
\node[inner sep=1.0pt, circle, fill=black] (e7) at (8.1,3){};
\node[right] at (8.1,3)  {$\ss e_7$};
\draw[-> , blue] (e5) to  (v);
\draw[-> , blue] (e6) to  (v);
\draw[-> , blue] (e7) to  (v);
\draw[-> , blue] (e4) to  (v);
\node[inner sep=1.0pt, circle, fill=black] (e41) at (0.8,2){};
\node[below] at (0.8,2)  {$\ss e_4e_1$};
\node[below] at (0.8,2) {$\vdots$};
\node[inner sep=1.0pt, circle, fill=black] (e42) at (1.5,2){};
\node[below] at (1.5,2)  {$\ss e_4e_2$};
\node[below] at (1.5,2) {$\vdots$};
\draw[-> , blue] (e41) to   (e4);

\draw[-> , blue] (e42) to  (e4);
\node[inner sep=1.0pt, circle, fill=black] (e43) at (2.2,2){};
\node[below] at (2.2,2)  {$\ss e_4e_3$};
\node[below] at (2.2,2) {$\vdots$};

\draw[-> , blue] (e43) to   (e4);

\node[inner sep=1.0pt, circle, fill=black] (e51) at (2.8,2){};
\node[below] at (2.8,2)  {$\ss e_5e_1$};
\node[below] at (2.8,2) {$\vdots$};
\node[inner sep=1.0pt, circle, fill=black] (e52) at (3.5,2){};
\node[below] at (3.5,2)  {$\ss e_5e_2$};
\node[below] at (3.5,2) {$\vdots$};
\draw[-> , blue] (e51) to   (e5);

\draw[-> , blue] (e52) to  (e5);
\node[inner sep=1.0pt, circle, fill=black] (e53) at (4.2,2){};
\node[below] at (4.2,2)  {$\ss e_5e_3$};
\node[below] at (4.2,2) {$\vdots$};
\draw[-> , blue] (e53) to   (e5);

\node[inner sep=1.0pt, circle, fill=black] (e64) at (4.8,2){};
\node[inner sep=1.0pt, circle, fill=black] (e65) at (5.4,2){};
\node[inner sep=1.0pt, circle, fill=black] (e66) at (6,2){};
\node[inner sep=1.0pt, circle, fill=black] (e67) at (6.6,2){};

\draw[-> , blue] (e64) to   (e6);
\draw[-> , blue] (e65) to   (e6);
\draw[-> , blue] (e66) to   (e6);
\draw[-> , blue] (e67) to   (e6);

\node[below] at (4.8,2)  {$\ss e_6e_4$};
\node[below] at (5.4,2)  {$\ss e_6e_5$};
\node[below] at (6,2)  {$\ss e_6e_6$};
\node[below] at (6.6,2)  {$\ss e_6e_7$};

\node[below] at (4.8,2) {$\vdots$};
\node[below] at (5.4,2) {$\vdots$};
\node[below] at (6,2) {$\vdots$};
\node[below] at (6.6,2) {$\vdots$};

\node[inner sep=1.0pt, circle, fill=black] (e74) at (7.2,2){};
\node[inner sep=1.0pt, circle, fill=black] (e75) at (7.8,2){};
\node[inner sep=1.0pt, circle, fill=black] (e76) at (8.4,2){};
\node[inner sep=1.0pt, circle, fill=black] (e77) at (9,2){};

\draw[-> , blue] (e74) to   (e7);
\draw[-> , blue] (e75) to   (e7);
\draw[-> , blue] (e76) to   (e7);
\draw[-> , blue] (e77) to   (e7);

\node[below] at (7.2,2)  {$\ss e_7e_4$};
\node[below] at (7.8,2)  {$\ss e_7e_5$};
\node[below] at (8.4,2)  {$\ss e_7e_6$};
\node[below] at (9,2)  {$\ss e_7e_7$};

\node[below] at (7.2,2) {$\vdots$};
\node[below] at (7.8,2) {$\vdots$};
\node[below] at (8.4,2) {$\vdots$};
\node[below] at (9,2) {$\vdots$};

\end{tikzpicture}
\]
The groupoid $G$ with generators $a,b$ such that $d(a)=t(a)=u,\; d(b)=t(b)=v$ acts on $E^*$ by
\[a\cdot e_1=e_2,\;\; a|_{e_1}=u,\;\; a\cdot e_2=e_1, \;\; a|_{e_2}=a, \;\; a\cdot e_3=e_3,\;\; a|_{e_3}=v,\]
\[b\cdot e_4=e_4,\;\; b|_{e_4}=a,\;\; b\cdot e_5=e_5,\;\; b|_{e_5}=a,\]
\[b\cdot e_6=e_7, \;\; b|_{e_6}=v,\;\; b\cdot e_7=e_6, \;\; b|_{e_7}=b.\]

It follows that $G$ is a group bundle with $G_u^u\cong G_v^v\cong \ZZ$, so $C^*(G)\cong C(\TT)\oplus C(\TT)$. This self-similar action is related in \cite{LRRW} and \cite{EP} to the Katsura algebra $\Oo_{A,B}$ with matrices
\[A=\left(\begin{array}{cc} 2&1\\2&2\end{array}\right),\;\; B=\left(\begin{array}{cc} 1&0\\2&1\end{array}\right).\]
\end{example}

\begin{dfn} A self similar action $(G,E)$ is called contracting if there is a finite subset $N$ of $G$ such that for every $g\in G$ there is $n\ge 0$ such that $g|_\mu\in N$ for every $\mu\in d(g)E^*$ of length $|\mu|\ge n$. The smallest such finite subset of $G$ is called the nucleus of the groupoid, denoted $\Nn$. 

The Moore diagram for $\Nn$ is the labelled directed graph with vertex set $\Nn$ and, for each $g \in \Nn$ and $e \in d(g)E^1$, an edge from $g\in \Nn$ to $g|_e \in \Nn$ labelled $(e, g\cdot e)$.  A typical edge in a Moore diagram looks like

\[\begin{tikzpicture}[shorten >=0.4pt,>=stealth, thick]
\node[inner sep=1.0pt, circle, fill=black]  (u) at (-2,0) {};
\node[inner sep=1.0pt, circle, fill=black]  (v) at (	2,0) {};
\draw[-> , black] (u) to   (v);
\node[above] at (0,0)  {$ (e,g\cdot e)$};
\node[left] at (-2,0)  {$ g$};
\node[right] at (2,0)  {$g|_e$};
\end{tikzpicture}
\]
The self-similarity relations for the set $\Nn$ can be read off the Moore diagram: the edge above encodes the relation $g\cdot (e\mu) = (g \cdot e)(g|_e \cdot\mu)$ for $\mu\in  s(e)E^*$.
\end{dfn}
\begin{example}
Consider the self-similar groupoid action $(G,E)$, where $E$ is the graph
\vspace{-5mm}
\[\begin{tikzpicture}[shorten >=0.4pt,>=stealth, semithick]
\renewcommand{\ss}{\scriptstyle}
\node[inner sep=1.0pt, circle, fill=black]  (u) at (-1,0) {};
\node[below] at (u.south)  {$\ss u$};
\node[inner sep=1.0pt, circle, fill=black]  (v) at (1,0) {};
\node[below] at (v.south)  {$\ss v$};

\draw[->, blue] (u) to (v);
 \node at (0,0.7) {$\ss e_4$}; 
\draw[->, blue] (u) .. controls (-2.5,1.5) and (-2.5, -1.5) .. (u);
\node at (-2.35,0) {$\ss e_1$};
\draw[->, blue] (u) to [out=45, in=135]  (v);
\node at (0,0.2){$\ss e_3$};
\draw[->, blue] (v) to [out=-135, in=-45]  (u);
\node at (0,-0.7) {$\ss e_2$};

\end{tikzpicture}
\vspace{-10mm}
\]
and $G=\la a,b\ra$ where
\[a\cdot e_1=e_4, \; a|_{e_1}=u,\; a\cdot e_2=e_3, \; a|_{e_2}=b,\]
\[b\cdot e_3=e_1, \; b|_{e_3}=u,\; b\cdot e_4=e_2, \; b|_{e_4}=a.\]

Then $(G,E)$ contracts to $\Nn=\{u,v,a,a^{-1}, b, b^{-1}\}$ and the corresponding Moore diagram is
\[\begin{tikzpicture}[shorten >=0.4pt,>=stealth, semithick]
\renewcommand{\ss}{\scriptstyle}
\node[inner sep=1.0pt, circle, fill=black]  (u) at (-3,-1) {};
\node[below] at (u.south)  {$\ss a$};
\node[inner sep=1.0pt, circle, fill=black]  (v) at (-3,1) {};
\node[above] at (v.north)  {$\ss b$};
\node[inner sep=1.0pt, circle, fill=black]  (w) at (0,0) {};
\node[below] at (w.south)  {$\ss u$};
\node[inner sep=1.0pt, circle, fill=black]  (x) at (0,2) {};
\node[above] at (x.north)  {$\ss v$};

\node[inner sep=1.0pt, circle, fill=black]  (y) at (3,-1) {};
\node[below] at (y.south)  {$\ss a^{-1}$};
\node[inner sep=1.0pt, circle, fill=black]  (z) at (3,1) {};
\node[above] at (z.north)  {$\ss b^{-1}$};
\draw[->, black] (u) to [out=135, in=-135]  (v);
\node at (-4,0){$\ss (e_2,e_3)$};
\draw[->, black] (v) to [out=-45, in=45]  (u);
\node at (-2,0) {$\ss (e_4,e_2)$};
\draw[->, black] (x) to [out=190, in=150]  (w);
\node at (-1,1.5){$\ss (e_4, e_4)$};
\draw[->, black] (x) to [out=215, in=125] (w);
\node at (0,1){$\ss (e_3, e_3)$};
\draw[->, black] (w) to [out=45, in=-35]  (x);
\node at (0.9,1.5) {$\ss (e_2,e_2)$};
\draw[->, black] (v) to [out=-30, in=170] (w);
\node at (-1.5,0.75) {$\ss (e_3,e_1)$};
\draw[->, black] (u) to  (w);
\node at (-1.5,-0.8) {$\ss (e_1,e_4)$};
\draw[->, black] (y) to (w);
\node at (1.5,0.75) {$\ss (e_1,e_3)$};
\draw[->, black] (z) to [out=210, in=10] (w);
\node at (1.5,-0.8) {$\ss (e_4,e_1)$};
\draw[->, black] (w) .. controls (-1.5,-1.5) and (1.5, -1.5) .. (w);
\node at (0,-1.4) {$\ss (e_1, e_1)$};
\draw[->, black] (y) to [out=135, in=-135]  (z);
\node at (2,0){$\ss (e_3,e_2)$};
\draw[->, black] (z) to [out=-45, in=45]  (y);
\node at (4,0) {$\ss (e_2,e_4)$};
\end{tikzpicture}\]
see Proposition 9.2 in \cite{LRRW}.
\end{example}

\bigskip

\bigskip

\section{Groupoids and $C^*$-algebras associated with self-similar actions}

\bigskip

We first recall some constructions and results from \cite{D}.
\begin{dfn}
Given a self-similar groupoid action $(G,E)$, the $C^*$-algebra $C^*(G,E)$    is defined as the Cuntz-Pimsner algebra of the $C^*$-correspondence \[\Mm=\Mm(G,E)=\Xx(E)\otimes_{C(E^0)}C^*(G)\] over $C^*(G)$.   Here $\Xx(E)=C(E^1)$ is the $C^*$-correspondence over $C(E^0)$ associated to the graph $E$ and $C(E^0)=C(G^{(0)})\subseteq C^*(G)$. The right action of $C^*(G)$ on $\Mm$ is the usual one and the left action is determined by the  representation
 \[W:G\to \Ll(\Mm), \;\; W_g(i_e\otimes a)=\begin{cases} i_{g\cdot e}\otimes i_{g|_e}a\;\;\text{if}\; d(g)=r(e)\\0\;\;\text{otherwise,}\end{cases}\]
 where $ i_e\in C(E^1)$ and $i_g\in C_c(G)$ are point masses for $e\in E^1, g\in G$ and  $a\in C^*(G)$. The inner product of $\Mm$ is given by
\[\la \xi\otimes a,\eta\otimes b\ra=\la\la\eta,\xi\ra\; a,b\ra=a^*\la \xi,\eta\ra\; b\]
for $\xi, \eta\in C(E^1)$ and $a,b\in C^*(G)$. 
\end{dfn}
\begin{rmk}
 The elements $i_e\otimes 1$ for $e\in E^1$ form a Parseval frame for $\Mm$ and every $\zeta\in \Mm$ is a finite sum \[\zeta=\sum_{e\in E^1}i_e\otimes\la i_e\otimes 1,\zeta\ra.\]
In particular, if $\Xx(E)^*$ denotes the dual $C^*$-correspondence, then 
\[\Ll(\Mm)=\Kk(\Mm)\cong \Xx(E)\otimes_{C(E^0)}C^*(G)\otimes_{C(E^0)}\Xx(E)^*\cong M_n\otimes C^*(G),\] where $n=|E^1|$. The isomorphism is given by
\[i_{e_j}\otimes i_g\otimes i_{e_k}^*\mapsto e_{jk}\otimes i_g\] for $E^1=\{e_1,...,e_n\}$ and for matrix units $e_{jk}\in M_n$. There is a unital homomorphism $\Kk(\Xx(E))\to \Kk(\Mm)$ given by \[i_e\otimes i_f^*\to i_e\otimes 1\otimes i_f^*.\]


\end{rmk}
We recall the following result, see Propositions 4.4 and 4.7 in \cite{LRRW}.
\begin{thm}\label{gen}
If $U_g, P_v$ and $S_e$ are the images of $g\in G, v\in E^0=G^{(0)}$ and of $e\in E^1$ in the Cuntz-Pimsner algebra $C^*(G,E)$, then
\begin{itemize}

\item $g\mapsto U_g$ is a representation by partial isometries of $G$ with $U_v=P_v$ for $v\in E^0$;

\item $S_e$ are partial isometries with $S_e^*S_e=P_{s(e)}$ and $\ds \sum_{r(e)=v}S_eS_e^*=P_v$;

\item  $U_gS_e=\begin{cases}S_{g\cdot e}U_{g|_e}\;\mbox{if}\; d(g)=r(e)\\0,\;\mbox{otherwise;}\end{cases}$

\item $ U_gP_v=\begin{cases}P_{g\cdot v}U_g\;\mbox{if}\; d(g)=v\\0,\;\mbox{otherwise.}\;\end{cases}$
\end{itemize}
There is a gauge action $\gamma$ of $\TT$ on $C^*(G,E)$ such that $\gamma_z(U_g)=U_g,$ and $\gamma_z(S_e)=zS_e$ for $z\in \TT$.

Given $\mu=e_1\cdots e_n\in E^*$ with $e_i\in E^1$, we let $S_\mu:=S_{e_1}\cdots S_{e_n}$. Then $C^*(G,E)$ is the closed linear span of elements $S_\mu U_gS_\nu^*$, where $\mu, \nu\in E^*$ and $g\in G_{s(\nu)}^{s(\mu)}$.
\end{thm}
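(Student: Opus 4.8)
The plan is to obtain every assertion from the universal property of the Cuntz--Pimsner algebra $C^*(G,E)=\Oo_\Mm$ together with the explicit formula for the left action $W$. Let $(\pi,t)$ denote the universal covariant representation of $\Mm$, so that $\pi\colon C^*(G)\to C^*(G,E)$ is a $*$-homomorphism, $t\colon\Mm\to C^*(G,E)$ is linear with $\pi(a)t(\zeta)=t(a\cdot\zeta)$, $t(\zeta)\pi(a)=t(\zeta\cdot a)$ and $t(\zeta)^*t(\eta)=\pi(\la\zeta,\eta\ra)$, and I would define $U_g=\pi(i_g)$, $P_v=\pi(i_v)$, $S_e=t(i_e\otimes 1)$. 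Since $\pi$ is a $*$-homomorphism of the groupoid algebra and $i_gi_h=i_{gh}$ on composable pairs with $i_g^*=i_{g^{-1}}$ and $i_v^2=i_v=i_v^*$, the first item and the relation $U_gP_v=P_{g\cdot v}U_g$ (where $g\cdot v=t(g)$ when $v=d(g)$) are just the images of identities already holding in $C^*(G)$. For the covariance relation I would compute $\pi(i_g)t(i_e\otimes 1)=t\bigl(W_g(i_e\otimes 1)\bigr)$ and read off from Definition~\ref{ss} that $W_g(i_e\otimes 1)=i_{g\cdot e}\otimes i_{g|_e}=(i_{g\cdot e}\otimes 1)\cdot i_{g|_e}$, so that the right side equals $t(i_{g\cdot e}\otimes 1)\pi(i_{g|_e})=S_{g\cdot e}U_{g|_e}$. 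Finally $S_e^*S_e=t(i_e\otimes 1)^*t(i_e\otimes 1)=\pi(\la i_e\otimes 1,i_e\otimes 1\ra)=\pi(\la i_e,i_e\ra_{\Xx(E)})=\pi(i_{s(e)})=P_{s(e)}$.

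The relation $\sum_{r(e)=v}S_eS_e^*=P_v$ is where Cuntz--Pimsner covariance enters. As recorded in the Remark, $\{i_e\otimes 1\}_{e\in E^1}$ is a Parseval frame, so the rank-one operators $\theta_{i_e\otimes 1,\,i_e\otimes 1}$ sum to the identity of $\Mm$, and $\phi(i_v)=\sum_{r(e)=v}\theta_{i_e\otimes 1,\,i_e\otimes 1}$ because $W_v$ fixes $i_e\otimes 1$ when $r(e)=v$ and annihilates it otherwise. Since $\phi$ is injective with $\phi(C^*(G))\subseteq\Kk(\Mm)=\Ll(\Mm)$, the Katsura ideal is all of $C^*(G)$ and covariance gives $\pi(a)=\psi_t(\phi(a))$ for every $a$; applying $\psi_t$ (which sends $\theta_{\zeta,\eta}\mapsto t(\zeta)t(\eta)^*$) to $\phi(i_v)$ yields $P_v=\sum_{r(e)=v}S_eS_e^*$. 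For the gauge action I would invoke universality once more: for $z\in\TT$ the pair $(\pi,\,z\,t)$ is again covariant, since rescaling $t$ by a unimodular scalar preserves $t(\zeta)^*t(\eta)=\pi(\la\zeta,\eta\ra)$ and the covariance identity; it therefore induces an endomorphism $\gamma_z$ fixing each $U_g$ and scaling each $S_e$ by $z$, and the $\gamma_z$ are mutually inverse, strongly continuous automorphisms.

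The main work is the spanning statement, and here the groupoid (rather than group) structure is the source of the subtlety. I would first record that $S_\mu=S_\mu P_{s(\mu)}$ and $U_g=P_{t(g)}U_gP_{d(g)}$, so $S_\mu U_gS_\nu^*\neq 0$ forces $t(g)=s(\mu)$ and $d(g)=s(\nu)$, i.e.\ $g\in G_{s(\nu)}^{s(\mu)}$, which explains the index range. Each generator lies in the proposed span: $U_g=S_{t(g)}U_gS_{d(g)}^*$, $S_e=S_eU_{s(e)}S_{s(e)}^*$, and $t(i_e\otimes i_g)=S_eU_g$. It then suffices to show that $D:=\clsp\{S_\mu U_gS_\nu^*\}$ is a $*$-subalgebra. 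It is visibly $*$-closed, and for products one computes $(S_\mu U_gS_\nu^*)(S_\sigma U_hS_\tau^*)=S_\mu U_g(S_\nu^*S_\sigma)U_hS_\tau^*$ and reduces $S_\nu^*S_\sigma$ to $0$, to $S_{\sigma'}$, or to $S_{\nu'}^*$ according to whether $\sigma$ extends $\nu$, $\nu$ extends $\sigma$, or neither. In the first case one normal-orders with the iterated covariance $U_gS_{\sigma'}=S_{g\cdot\sigma'}U_{g|_{\sigma'}}$ coming from Definition~\ref{ss} and properties (1)--(3); in the second one uses its adjoint $S_{\nu'}^*U_h=U_{h|_{h^{-1}\cdot\nu'}}S_{h^{-1}\cdot\nu'}^*$, which rests on property (4). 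Each case collapses to a single term $S_{\mu'}U_kS_{\tau'}^*$ with $k$ a composite of a restriction and a factor, via $U_{g'}U_{h'}=U_{g'h'}$ on composable pairs.

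The hard part is precisely this last bookkeeping. Because the source map is not $G$-equivariant, one must track domains and targets carefully to verify that the restrictions $g|_{\sigma'}$ and the composites are actually defined and land in the correct isotropy/orbit sets, so that the normal-ordered element $S_{\mu'}U_kS_{\tau'}^*$ again satisfies $k\in G_{s(\tau')}^{s(\mu')}$; in the group case of \cite{EP} this is automatic, but here it requires the compatibility identities (1)--(4) at each step. Once closure under multiplication is established, $D$ is a closed $*$-subalgebra containing all generators of $C^*(G,E)$, hence $D=C^*(G,E)$.
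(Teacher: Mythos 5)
The paper itself does not prove this theorem: it recalls it from \cite{LRRW} (Propositions 4.4 and 4.7), and your overall strategy --- the universal covariant representation $(\pi,t)$, the Parseval frame $\{i_e\otimes 1\}$, a gauge action obtained from universality, and normal-ordering $S_\nu^*S_\sigma$ to prove the spanning claim --- is essentially the strategy of that source. Your verification of the first, third and fourth bullets, of $S_e^*S_e=P_{s(e)}$, of the gauge action, and your closure-under-multiplication bookkeeping (including the adjoint relation $S_{\nu'}^*U_h=U_{h|_{h^{-1}\cdot\nu'}}S_{h^{-1}\cdot\nu'}^*$, which indeed rests on property (4)) are all correct.

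The genuine gap is in your derivation of $\sum_{r(e)=v}S_eS_e^*=P_v$. You assert that the left action $\phi$ is injective, so that Katsura's ideal $\phi^{-1}(\Kk(\Mm))\cap(\ker\phi)^\perp$ is all of $C^*(G)$. Faithfulness of the action on $T_E$ does \emph{not} imply this: faithfulness says the level-one data $(g\cdot e,\,g|_e)_{e\in d(g)E^1}$ jointly determine $g$, but $\ker\phi$ can contain combinations that cancel edge by edge. Concretely, let $E$ have one vertex and two loops $0,1$, let $G$ be the group of finitary automorphisms of the binary tree, $a$ the root swap, and (in wreath-recursion notation, with trivial level-one permutation and the indicated restrictions) $g_1=(1;a,a)$, $g_2=(1;a,1)$, $g_3=(1;1,a)$, $g_4=1$. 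These are four distinct elements, yet $x=i_{g_1}-i_{g_2}-i_{g_3}+i_{g_4}$ satisfies $\phi(x)(i_0\otimes y)=i_0\otimes(i_a-i_a-i_1+i_1)y=0$ and $\phi(x)(i_1\otimes y)=i_1\otimes(i_a-i_1-i_a+i_1)y=0$, so $0\neq x\in\ker\phi$. This is a bona fide faithful (but not pseudo-free) self-similar action covered by the theorem as stated; for it $i_v\notin(\ker\phi)^\perp$, so Katsura covariance does not yield the relation. In fact, in Katsura's $\Oo_\Mm$ the relation $P_v=\sum_{r(e)=v}S_eS_e^*$ must \emph{fail} whenever $\ker\phi\neq 0$: using $\pi(a)\psi_t(k)=\psi_t(\phi(a)k)$ it would force $\pi(a)=\psi_t(\phi(a))$ for every $a$, hence $\pi(\ker\phi)=0$, contradicting the injectivity of Katsura's $\pi$. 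The repair is to use the definition actually operative here and in \cite{LRRW}: since the left action is by compact operators --- which your Parseval-frame computation does establish --- $C^*(G,E)$ is the quotient of the Toeplitz algebra imposing $\pi(a)=\psi_t(\phi(a))$ for \emph{every} $a\in C^*(G)$, and then the second bullet holds by definition of covariance, with no injectivity claim needed. (Pseudo-freeness, assumed later in the paper, does make $\phi$ injective on $C_c(G)$, but the theorem is stated without it, and passing from injectivity on $C_c(G)$ to injectivity on the full $C^*(G)$ would be a further issue.) The rest of your argument goes through unchanged.
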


\begin{rmk}
The graph $C^*$-algebra $C^*(E)$ is embedded in $C^*(G,E)$ by considering elements $S_\mu S_\nu^*=S_\mu U_gS_\nu^*$ where $g=s(\nu)=s(\mu)$.
\end{rmk}

For each $k\ge 1$, consider $\Ff_k$ the closed linear span of elements $S_\mu U_gS_\nu^*$ with $\mu,\nu\in E^k$  and $g\in G_{s(\nu)}^{s(\mu)}$. Then the fixed point algebra  $\Ff(G,E):=C^*(G,E)^{\TT}$ under the gauge action is isomorphic to $\ds \varinjlim \Ff_k$. We have \[\Ff_k\cong \Ll(\Mm^{\otimes k})\cong \Xx(E)^{\otimes k}\otimes_{C(E^0)}C^*(G)\otimes_{C(E^0)}\Xx(E)^{*\otimes k}\]
using the map $S_\mu U_gS_\nu^*\mapsto i_\mu\otimes i_g\otimes i_\nu^*$, where  $i_\mu\in \Xx(E)^{\otimes k}=C(E^k)$ are point masses. The embeddings $\Ff_k\hookrightarrow \Ff_{k+1}$ are given by\[\phi_k(i_{\mu}\otimes i_g\otimes i_\nu^*)=\begin{cases}\ds \sum_{x\in d(g)E^1}i_{\mu y} \otimes i_{g|_x}\otimes i_{\nu x}^*,\;\text{if}\; g\in G_{s(\nu)}^{s(\mu)}\;\text{and}\; g\cdot x=y \\0,\;\text{ otherwise.}\end{cases}\]

\bigskip

\begin{dfn}
A self-similar groupoid action $(G,E)$ is called pseudo free if for every $g\in G$  and every $e\in d(g)E^1$, the condition $g\cdot e=e$ and $g|_{e}=s(e)$ implies that $g=r(e)$.
 \end{dfn}
 
 For example, the self-similar action in Example \ref{forest} is pseudo free.
\begin{rmk}
If $(G,E)$ is pseudo free,  then $g_1\cdot \alpha=g_2\cdot \alpha$ and $g_1|_\alpha=g_2|_\alpha$ for some $\alpha\in E^*$ implies $g_1=g_2$.
\end{rmk}

\begin{prop}  Given a self-similar groupoid action $(G,E)$,  there is an  inverse semigroup 
\[\Ss(G,E)=\{(\alpha, g, \beta): \alpha, \beta\in E^*, g\in G_{s(\beta)}^{s(\alpha)} \}\cup\{0\}\]
 with operations
\[(\alpha, g, \beta)(\gamma, h, \zeta)=\begin{cases}(\alpha, g(h|_{h^{-1}\cdot\mu}), \zeta(h^{-1}\cdot \mu))&\;\text{if}\; \beta=\gamma\mu\\(\alpha(g\cdot\mu), g|_{\mu}h, \zeta)&\;\text{if}\; \gamma=\beta\mu\\0&\;\text{otherwise}\end{cases}\]
and $(\alpha, g, \beta)^*=(\beta, g^{-1}, \alpha)$. 

\end{prop}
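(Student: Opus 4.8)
The plan is to prove the proposition by realizing $\Ss(G,E)$ concretely as a semigroup of partial bijections of the path space, so that associativity --- the only delicate axiom --- is inherited from ordinary composition of maps rather than verified by a bare-hands case analysis. To a triple $(\alpha,g,\beta)$ with $g\in G_{s(\beta)}^{s(\alpha)}$ I associate the partial bijection $\theta_{(\alpha,g,\beta)}\colon Z(\beta)\to Z(\alpha)$ of $E^\infty$ given by $\beta\xi\mapsto\alpha(g\cdot\xi)$ for $\xi\in s(\beta)E^\infty$; since $g$ acts bijectively from $s(\beta)E^\infty$ onto $s(\alpha)E^\infty$, this is a well-defined bijection between cylinders, and I send the symbol $0$ to the empty map. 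This defines a map $\pi$ from $\Ss(G,E)$ into the symmetric inverse monoid $\mathcal{I}(E^\infty)$ of partial bijections of $E^\infty$, and the whole statement will follow once I show that $\pi$ is injective and carries the stated operations to composition and inversion of partial bijections.

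Injectivity is immediate: as $E$ has no sources every cylinder is nonempty, so $Z(\beta)=Z(\beta')$ forces $\beta=\beta'$ and $Z(\alpha)=Z(\alpha')$ forces $\alpha=\alpha'$; then $\alpha(g\cdot\xi)=\alpha(g'\cdot\xi)$ for all $\xi$ yields $g\cdot\xi=g'\cdot\xi$ on $s(\beta)E^\infty$, hence $g=g'$ by faithfulness of the action (Definition \ref{ss}). The real content is the identity $\theta_{(\alpha,g,\beta)}\circ\theta_{(\gamma,h,\zeta)}=\pi\big((\alpha,g,\beta)(\gamma,h,\zeta)\big)$, with composition written so that $\theta\psi$ acts as $\theta\circ\psi$, the right-hand factor acting first. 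The composite is nonempty precisely when the range $Z(\gamma)$ of the second map meets the domain $Z(\beta)$ of the first, that is when $\beta$ and $\gamma$ are comparable; otherwise $Z(\beta)\cap Z(\gamma)=\emptyset$ and the composite is empty, matching the ``otherwise $0$'' clause.

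It then remains to match the two nontrivial clauses. When $\beta=\gamma\mu$, a point of the effective domain has the form $\zeta\eta$ with $h\cdot\eta$ beginning with $\mu$; writing $\eta=(h^{-1}\cdot\mu)\eta'$ and using $h\cdot\big((h^{-1}\cdot\mu)\eta'\big)=\mu\,(h|_{h^{-1}\cdot\mu}\cdot\eta')$ together with $g\cdot(g'\cdot\eta')=(gg')\cdot\eta'$, the composite sends $\zeta(h^{-1}\cdot\mu)\eta'$ to $\alpha\big((g\,h|_{h^{-1}\cdot\mu})\cdot\eta'\big)$, which is exactly $\theta$ of the first line of the multiplication formula. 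The case $\gamma=\beta\mu$ is symmetric, using $g\cdot\big(\mu(h\cdot\eta)\big)=(g\cdot\mu)\big((g|_\mu h)\cdot\eta\big)$, and reproduces the second line. These are precisely the self-similarity identities (1)--(4) recalled after Definition \ref{ss}; the only subtle points are that, because the source map is not $G$-equivariant, one must track the domain and target of each restriction to see that the middle entries $g\,h|_{h^{-1}\cdot\mu}$ and $g|_\mu h$ are genuinely composable and lie in the correct sets $G_{s(\cdot)}^{s(\cdot)}$, and that the two lines agree on their overlap $\mu=s(\beta)=s(\gamma)$, where both return $(\alpha,gh,\zeta)$. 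This bookkeeping simultaneously shows that the product of two elements of $\Ss(G,E)$ is again a legitimate element of $\Ss(G,E)$.

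Finally $\theta_{(\alpha,g,\beta)}^{-1}$ sends $\alpha\sigma\mapsto\beta(g^{-1}\cdot\sigma)$, so it equals $\theta_{(\beta,g^{-1},\alpha)}=\pi\big((\alpha,g,\beta)^*\big)$, and $\pi$ intertwines $*$ with the inversion of partial bijections as well. Hence $\pi(\Ss(G,E))$ is a subset of $\mathcal{I}(E^\infty)$ containing the empty map and closed under composition and inverses, so it is an inverse subsemigroup; its idempotents are partial identities on cylinders, which commute inside $\mathcal{I}(E^\infty)$. Since $\pi$ is an operation-preserving bijection onto $\pi(\Ss(G,E))$, the multiplication on $\Ss(G,E)$ is associative and $\Ss(G,E)$ is itself an inverse semigroup with inversion given by the stated $*$. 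I expect the two-clause verification that $\pi$ is multiplicative to be the only real obstacle, the delicate part being the domain/target bookkeeping forced by the non-equivariance of the source map.
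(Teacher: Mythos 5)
The paper itself states this proposition without proof, so the only question is whether your argument stands on its own; its architecture (represent $\Ss(G,E)$ by partial bijections, pull back associativity) is reasonable and your two composition computations are exactly the right self-similarity identities. But there is a genuine gap at the step you call ``immediate'': injectivity of $\pi$. Nonemptiness of cylinders only shows that \emph{incomparable} paths have disjoint cylinders; it does not rule out $Z(\beta)=Z(\beta\mu)$ with $|\mu|\ge 1$, which happens precisely when $s(\beta)E^{|\mu|}=\{\mu\}$, i.e.\ when there is no branching below $s(\beta)$ for $|\mu|$ steps. The proposition is asserted for every finite graph with no sources, and for the graph with one vertex and one loop, $E^\infty$ is a single point and every nonzero triple $(e^m,v,e^n)$ is sent to the identity map of that point, so $\pi$ collapses the whole (bicyclic-type) semigroup to two elements. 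Even adding the Cantor hypothesis used later in the paper does not help: if $u$ has a unique incoming edge $e$ whose source carries two loops, then $E^\infty$ is Cantor but $Z(u)=Z(e)$, so $(u,u,u)$ and $(e,s(e),e)$ have the same image. This is not a cosmetic defect, because injectivity is exactly what you use to transport associativity from $\mathcal{I}(E^\infty)$ back to the abstractly defined operation; without it the argument proves nothing about $\Ss(G,E)$ itself. (Indeed, the germ construction in the paper exists precisely because distinct elements of $\Ss(G,E)$ can act identically on $E^\infty$.)

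The gap is reparable without changing your architecture: let the triples act on \emph{finite} paths instead of infinite ones, sending $(\alpha,g,\beta)$ to the partial bijection $\beta E^*\to\alpha E^*$, $\beta\mu\mapsto\alpha(g\cdot\mu)$. Now the domain determines $\beta$, since $\beta=\beta s(\beta)$ lies in $\beta E^*$ as its unique shortest element, and likewise the range determines $\alpha$; then $g=g'$ follows from faithfulness of the $G$-action on $E^*$, which is built into Definition \ref{ss} and needs no hypothesis on branching. Your two multiplication computations, the overlap case $\mu=s(\beta)$, and the inversion formula go through verbatim with $\eta,\eta'$ finite, so the corrected $\pi$ is an injective, operation-preserving map into $\mathcal{I}(E^*)$, and your transport argument then yields the conclusion as intended.
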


\begin{rmk}
The inverse semigroup $\Ss(G,E)$ acts on the infinite path space $E^\infty$ by partial homeomorphisms. The action of $(\alpha, g, \beta)\in \Ss(G,E)$ on $\xi=\beta\mu\in \beta E^\infty$ is given by \[(\alpha, g, \beta)\cdot \beta\mu=\alpha(g\cdot\mu)\in \alpha E^\infty.\]  Note that \[r(g\cdot \mu)=g\cdot r(\mu)=g\cdot s(\beta)=s(\alpha),\] so the action is well defined.
\end{rmk}
\begin{thm}
If the self-similar action $(G, E)$ is pseudo free,  then there is a locally compact  Hausdorff  \'etale groupoid $\Gg(G,E)$ such  that \[C^*(G, E)\cong C^*(\Gg(G,E)).\] 
If $G$ is amenable, then $C^*(G, E)$ is nuclear and $\Gg(G,E)$ is also amenable.
\end{thm}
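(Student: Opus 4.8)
The plan is to realize $\Gg(G,E)$ as the groupoid of germs of the natural action of the inverse semigroup $\Ss(G,E)$ on the infinite path space $E^\infty$, and then to match it against the Cuntz--Pimsner picture of $C^*(G,E)$ provided by Theorem \ref{gen}. Recall that $\Ss(G,E)$ acts on $E^\infty$ by the partial homeomorphisms $(\alpha,g,\beta)\cdot\beta\mu=\alpha(g\cdot\mu)$, with domain $Z(\beta)$ and range $Z(\alpha)$. First I would set
\[
\Gg(G,E)=\{[(\alpha,g,\beta);\xi] : (\alpha,g,\beta)\in\Ss(G,E),\ \xi\in Z(\beta)\},
\]
where two germs $[(\alpha,g,\beta);\xi]$ and $[(\alpha',g',\beta');\xi]$ agree when the corresponding partial homeomorphisms coincide on a neighbourhood of $\xi$. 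A concrete model, as in \cite{EP}, is the set of triples $(\alpha(g\cdot\eta),\,|\alpha|-|\beta|,\,\beta\eta)$ with $\eta\in s(\beta)E^\infty$, with $d$ and $t$ the source and range of the germ and multiplication induced from $\Ss(G,E)$; the sets $\Theta(\alpha,g,\beta)=\{[(\alpha,g,\beta);\xi]:\xi\in Z(\beta)\}$ form a basis of compact open bisections.

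Next I would verify the topological properties. \' Etaleness is automatic for a groupoid of germs: each $\Theta(\alpha,g,\beta)$ is a bisection on which $d$ and $t$ restrict to homeomorphisms onto $Z(\beta)$ and $Z(\alpha)$. Local compactness follows because the unit space $E^\infty$ is locally compact, being compact when $E$ is finite. The crucial point is Hausdorffness, and this is exactly where the pseudo-free hypothesis enters: I would show that whenever two germs agree at a point they agree on a whole cylinder, equivalently that for each $(\alpha,g,\beta)$ the set of $\xi\in Z(\beta)$ at which the germ is a unit is clopen. Using the remark that pseudo-freeness yields $g_1\cdot\alpha=g_2\cdot\alpha$ and $g_1|_\alpha=g_2|_\alpha\Rightarrow g_1=g_2$, the germ relation reduces, after restricting to a sufficiently long cylinder, to an actual equality of triples, giving the required clopen fixed-point sets. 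This is the step that must be reworked relative to \cite{EP}, because the failure of $G$-equivariance of the source map of $E$ changes the cylinder bookkeeping, and I expect it to be the main obstacle.

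For the isomorphism $C^*(G,E)\cong C^*(\Gg(G,E))$ I would use the universal property of the tight $C^*$-algebra. By Exel's theorem, $C^*(\Gg(G,E))$ is universal for tight representations of $\Ss(G,E)$, since $\Gg(G,E)$ is the tight groupoid and its unit space is the tight spectrum $E^\infty$. On the other hand $(\alpha,g,\beta)\mapsto S_\alpha U_g S_\beta^*$ is a representation of $\Ss(G,E)$ in $C^*(G,E)$, and the relations of Theorem \ref{gen}, in particular $\sum_{r(e)=v}S_eS_e^*=P_v$ and $U_gS_e=S_{g\cdot e}U_{g|_e}$, say precisely that it is tight. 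This yields a surjection $\Phi:C^*(\Gg(G,E))\to C^*(G,E)$, as the elements $S_\mu U_gS_\nu^*$ span $C^*(G,E)$. For injectivity I would invoke a gauge-invariant uniqueness theorem: the grading $\rho[(\alpha,g,\beta);\xi]=|\alpha|-|\beta|$ induces a $\TT$-action on $C^*(\Gg(G,E))$ intertwined by $\Phi$ with the gauge action $\gamma$ of Theorem \ref{gen}; since $C^*(G)$ and $C(E^\infty)=C(\Gg(G,E)^{(0)})$ embed faithfully, $\Phi$ is faithful on the fixed-point algebra $\Ff(G,E)=\varinjlim\Ff_k$ and hence injective.

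Finally, for amenability, if $G$ is amenable then $C^*(G)$ is nuclear, and since $C^*(G,E)$ is the Cuntz--Pimsner algebra $\Oo_\Mm$ of a correspondence over $C^*(G)$ with $\Kk(\Mm)\cong M_n\otimes C^*(G)$, nuclearity of $C^*(G,E)$ follows from the standard permanence of nuclearity for Cuntz--Pimsner algebras. For the groupoid I would deduce amenability from the cocycle $\rho:\Gg(G,E)\to\ZZ$. Its kernel $\Hh(G,E)$ is an increasing union of open subgroupoids built from $G$ and finite equivalence relations on the levels $E^k$, matching the tower $\Ff_k\cong\Ll(\Mm^{\otimes k})$; each such subgroupoid is amenable when $G$ is, so $\Hh(G,E)$ is amenable as a direct limit. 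Then $\Gg(G,E)$, being an extension of the amenable group $\ZZ$ by the amenable groupoid $\Hh(G,E)$, is itself amenable. Amenability of $G$ is exactly what makes each level groupoid amenable, which is why the hypothesis is needed.
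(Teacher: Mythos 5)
Your construction of $\Gg(G,E)$ is exactly the paper's: the groupoid of germs of the inverse semigroup $\Ss(G,E)$ acting on $E^\infty$, with the compact open bisections $Z(\alpha,g,\beta)$ as a basis, pseudo-freeness invoked for Hausdorffness (the paper also reduces this to Proposition 12.1 of \cite{EP}), nuclearity of $C^*(G,E)$ obtained from nuclearity of $C^*(G)$ via Cuntz--Pimsner permanence, and groupoid amenability via the $\ZZ$-valued cocycle with kernel $\Hh(G,E)$ built from $G$ and the level equivalence relations --- this last point is precisely the Exel--Pardo argument that the paper cites rather than repeats. Where you genuinely diverge is the isomorphism $C^*(G,E)\cong C^*(\Gg(G,E))$: the paper simply writes down the map $\phi:C^*(G,E)\to C^*(\Gg(G,E))$ on the generators, $\phi(P_v)=\chi_{Z(v,v,v)}$, $\phi(T_e)=\chi_{Z(e,s(e),s(e))}$, $\phi(U_g)=\chi_{Z(t(g),g,d(g))}$, relying on the universal property of the Cuntz--Pimsner algebra and the relations of Theorem \ref{gen}, whereas you construct a map in the opposite direction, $\Phi:C^*(\Gg(G,E))\to C^*(G,E)$, from Exel's universal property of the tight groupoid applied to the tight representation $(\alpha,g,\beta)\mapsto S_\alpha U_gS_\beta^*$, and then prove injectivity by a gauge-invariant uniqueness argument on the fixed-point algebra $\Ff(G,E)=\varinjlim\Ff_k$. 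Both routes are sound; the paper's is shorter and more explicit but leaves injectivity and surjectivity implicit in the matching of universal properties, while yours makes the injectivity step explicit at the cost of importing the tight-spectrum identification (that the tight spectrum of the idempotent semilattice is $E^\infty$) and the identification of the $\TT$-fixed-point algebra of $C^*(\Gg(G,E))$ with $\varinjlim\Ff_k$, both of which deserve the level-by-level verification that the paper carries out in its subsequent remarks on $\Hh_k(G,E)$ and $\Ff_k$.
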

\begin{proof}
Consider the groupoid of germs associated with $(\Ss(G,E), E^\infty)$:
\[\Gg(G,E)=\{[\alpha, g, \beta; \xi]: \alpha, \beta\in E^*,\; g\in G^{s(\alpha)}_{s(\beta)},\; \xi\in \beta E^\infty\}.\]
Two germs $[\alpha, g,\beta;\xi], [\alpha',g',\beta';\xi']$ in $\Gg(G,E)$ are equal if and only if $\xi=\xi'$ and there is an idempotent $z\in \Ss(G,E)\setminus \{0\}$ such that $z\cdot \xi=\xi$ and $(\alpha, g, \beta)z=(\alpha',g',\beta')z$. 
Using the form of idempotents, we obtain that $\xi=\beta\gamma\eta$ for $\gamma\in E^*$ and $\eta\in E^\infty$, with $r(\gamma)=s(\beta)$ and $r(\eta)=s(\gamma)$. Moreover,  \[\alpha'=\alpha(g\cdot \gamma), \beta'=\beta\gamma,\;\mbox{and}\; g'=g|_\gamma,\]
so
\[[\alpha, g,\beta;\beta\gamma\eta]= [\alpha(g\cdot \gamma),g|_\gamma,\beta\gamma;\beta\gamma\eta].\]
The unit space of $\Gg(G,E)$ is
\[\Gg(G,E)^{(0)}=\{[\alpha, s(\alpha), \alpha; \xi]: \xi\in \alpha E^\infty\},\] identified with $E^\infty$ by the map $[\alpha, s(\alpha), \alpha; \xi]\mapsto \xi$. 

The target and domain maps of the groupoid $\Gg(G,E)$ are given by
\[t([\alpha, g,\beta; \beta\mu])=\alpha(g\cdot\mu),\;\; d([\alpha, g, \beta;\beta\mu])=\beta\mu.\]
If two elements $\gamma_1,\gamma_2\in \Gg(G,E)$ are composable, then \[\gamma_1=[\alpha_1, g_1, \alpha_2; \alpha_2(g_2\cdot\xi)],\;\; \gamma_2=[\alpha_2, g_2, \beta; \beta\xi]\] for some $\alpha_1, \alpha_2, \beta\in E^*, \xi\in E^\infty, g_1, g_2\in G$ and in this case
\[\gamma_1\gamma_2=[\alpha_1, g_1g_2, \beta;\beta\xi].\]
In particular, $[\alpha, g,\beta;\beta\mu]^{-1}=[\beta, g^{-1}, \alpha;\alpha(g\cdot \mu)]$.

The topology on $\Gg(G,E)$ is generated by the compact open bisections of the form
\[Z(\alpha, g, \beta;U)=\{[\alpha, g, \beta; \xi]\in \Gg(G,E): \xi\in U\},\]
where $U$ is an open compact subset of $Z(\beta)=\beta E^\infty$. Since any open set $U\subseteq Z(\beta)$ is a disjoint union $\ds U=\bigsqcup_i Z(\beta\gamma_i)$ where $\gamma_i\in E^*$ with $r(\gamma_i)=s(\beta)$, we get
\[Z(\alpha, g,\beta;U)=\bigsqcup_iZ(\alpha, g, \beta;Z(\beta\gamma_i))=\bigsqcup_iZ(\alpha(g\cdot \gamma_i), g|_{\gamma_i}, \beta\gamma_i;Z(\beta\gamma_i)).\]
It follows that the sets $Z(\alpha, g, \beta)=Z(\alpha,g,\beta;Z(\beta))$ form a basis for the topology of $\Gg(G,E)$. In particular,
\[d(Z(\alpha, g, \beta))=Z(\beta)\;\;\text{and}\; \; t(Z(\alpha, g, \beta))=Z(\alpha).\]

Since $(G,E)$ is pseudo free, it follows that $[\alpha,g,\beta;\xi]=[\alpha, g',\beta;\xi]$ if and only if $g=g'$. Moreover, the groupoid $\Gg(G,E)$ is Hausdorff, see Proposition 12.1 in \cite{EP}. Using the properties given in Theorem \ref{gen} and the groupoid multiplication, the isomorphism $\phi: C^*(G,E)\to C^*(\Gg(G,E))$ is given by
 \[\phi(P_v)=\chi_{Z(v, v,v)},\]\[ \phi(T_e)=\chi_{Z(e, s(e),s(e))},\]\[\phi(U_g)=\chi_{Z(t(g),g,d(g))}\]
 for $v\in E^0, e\in E^1$ and $g\in G$. Here $\chi_Z$ is the indicator function of $Z$.

Since $G$ is amenable, it follows that $C^*(G)$ is nuclear and hence the Cuntz-Pimsner algebra $C^*(G,E)$ is nuclear. The same kind of argument as in \cite{EP} shows that $\Gg(G,E)$ is also amenable.
\end{proof}
\begin{rmk}
Note that the graph groupoid $\Gg_E$ is isomorphic to an open subgroupoid of $\Gg(G,E)$ with the same unit space $E^\infty$. It is obtained as
\[\Gg_E=\{(\alpha\xi, |\alpha|-|\beta|,\beta\xi)\in E^\infty\times \ZZ\times E^\infty: \alpha, \beta\in E^*, \; s(\alpha)=s(\beta)\}\cong\]\[\cong\{[\alpha, s(\alpha), \beta; \beta\xi]\in \Gg(G,E): \alpha, \beta\in E^*,\; s(\alpha)=s(\beta),\; \xi\in E^\infty\}.\]
\end{rmk}
\begin{dfn}
Given a self-similar action $(G,E)$, we say that $E$ is $G$-transitive if given two vertices $u,v\in E^0$, there are vertices $u_0=u,u_1,...,u_{2n}=v$ such that we can connect each $u_{2k-2}$ to $u_{2k-1}$ by a path in $E$  and there are $g_k\in G$ with $d(g_k)=u_{2k-1}$ and $t(g_k)=u_{2k}$ for all $k=1,...,n$.
\end{dfn}
Using the same method as in Theorem 13.6 and Corollary 13.7 in \cite{EP}, it follows that
\begin{thm} Given a self-similar action $(G,E)$, the groupoid $\Gg(G,E)$ is minimal iff $E$ is $G$-transitive. In particular, $\Gg(G,E)$ is minimal   if $G$ is transitive or if the adjacency matrix $A_E$ of $E$ is irreducible. The groupoid $\Gg(G,E)$ is also minimal  if the action of $G$ is level transitive, see Definition \ref{trans}.
\end{thm}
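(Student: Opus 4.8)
The plan is to follow the method of Theorem 13.6 and Corollary 13.7 in \cite{EP}, adapting it to account for the failure of $G$-equivariance of the source map. An \'etale groupoid is minimal precisely when every orbit in its unit space is dense, so it suffices to analyze the $\Gg(G,E)$-orbits inside $E^\infty$. Using the description of composable germs, the orbit of $\xi\in E^\infty$ is
\[\Gg(G,E)(\xi)=\{\alpha(g\cdot\mu):\xi=\beta\mu,\ \beta\in E^*,\ g\in G,\ d(g)=s(\beta),\ s(\alpha)=t(g)\},\]
and since the cylinders $Z(\gamma)$ form a basis for $E^\infty$, density of this orbit is equivalent to the assertion that for every $\gamma\in E^*$ there is a germ carrying $\xi$ into $Z(\gamma)$. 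Because $t(Z(\alpha,g,\beta))=Z(\alpha)$ and $d(Z(\alpha,g,\beta))=Z(\beta)$, producing such a germ amounts to choosing a prefix $\beta$ of $\xi$, an element $g$ with $d(g)=s(\beta)$, and a path $\alpha$ beginning with $\gamma$ and ending at $t(g)$.

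For the implication that $G$-transitivity forces minimality, I would fix $\xi$ and $\gamma$, set $v=r(\gamma)$, and choose a vertex $w$ visited by $\xi$, say $w=s(\beta)$ for a prefix $\beta$ of $\xi$. Applying $G$-transitivity to connect $w$ and $v$ produces a zigzag $w=u_0,u_1,\dots,u_{2n}=v$ of alternating directed paths and groupoid elements. The heart of the argument is to convert this zigzag into an honest composition of germs: each groupoid step $g_k:u_{2k-1}\to u_{2k}$ is realized by a germ $Z(t(g_k),g_k,d(g_k))$, while each path step is realized by the graph-groupoid germs $Z(\lambda,s(\lambda),s(\lambda))$ that prepend or delete a path. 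Composing these and using the multiplication formula $\gamma_1\gamma_2=[\alpha_1,g_1g_2,\beta;\beta\xi]$, the net effect is a single germ whose target lies in $Z(\gamma)$; here the restriction identities $g|_{\mu\nu}=(g|_\mu)|_\nu$ and $t(g|_\mu)=s(g\cdot\mu)$ from Proposition 3.6 of \cite{LRRW} are what keep domains and targets matched after each group element distorts the source vertices.

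For the converse I would read a zigzag off a germ. Assuming $\Gg(G,E)$ minimal, given $u,v\in E^0$ pick $\xi\in E^\infty$ with $r(\xi)=u$; density of its orbit yields a germ $[\alpha,g,\beta;\xi]$ with target in a cylinder $Z(\gamma)$, $r(\gamma)=v$. Then $\beta$ is a directed path from $u$ to $s(\beta)$, the element $g$ connects $s(\beta)$ to $s(\alpha)$ in $G$, and $\alpha$ is a directed path from $v$ to $s(\alpha)$; reading these data, together with the data of the inverse germ and using that orbit equivalence is symmetric, assembles exactly the alternating path/groupoid chain witnessing that $E$ is $G$-transitive. Equivalently, if $E$ fails to be $G$-transitive one partitions $E^0$ into more than one zigzag-class and checks that the infinite paths eventually trapped in one class form a nonempty proper closed invariant subset of $E^\infty$, contradicting minimality.

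Finally, the three sufficient conditions follow by exhibiting trivial zigzags: if $G$ is transitive a single element joins any two vertices (take $n=1$ with a length-zero path); if $A_E$ is irreducible a directed path joins any two vertices and one closes up with an identity morphism $id_v$; and level transitivity makes $G$ act transitively on each $E^k$, in particular on $E^1$, which immediately yields the zigzag condition, consistent with the fact that level transitivity already gives minimality of the $G$-action on $E^\infty$ by Definition \ref{trans}. I expect the main obstacle to be the bookkeeping in the forward direction caused by the non-equivariance $s(g\cdot e)\neq g\cdot s(e)$: unlike in \cite{EP}, applying a group element moves one onto a path whose vertices are governed by the restrictions $g|_e$ rather than by $g$ itself, so the zigzag must be realized through the transformed paths $g\cdot\mu$, and one must verify at each stage that the vertex reached, namely $s(g\cdot\mu)=t(g|_\mu)$, is the correct domain for the next groupoid element.
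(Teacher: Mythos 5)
Your first two paragraphs set up the right framework (minimality as density of orbits, a germ being exactly ``delete a prefix of $\xi$, act by $g$ at its base vertex, prepend a path''), and your forward implication is essentially the Exel--Pardo induction that the paper invokes: each zigzag step becomes an orbit move, with the restriction identities keeping sources and targets matched. Two slips there: the vertex the zigzag must connect to the vertices of $\xi$ is $s(\gamma)$, not $r(\gamma)$ --- a germ lands in $Z(\gamma)$ by prepending $\alpha=\gamma\delta$ with $s(\delta)=t(g)$ and $r(\delta)=s(\gamma)$, exactly as your own first paragraph says, and reaching $Z(r(\gamma))$ is strictly weaker than reaching $Z(\gamma)$; also ``prepend or delete a path'' should be ``prepend'' only, since one can delete only paths that are actually prefixes of the point at hand (namely of $\xi$, at the first step).

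The genuine gap is in the converse. From a germ $[\alpha,g,\beta;\xi]$ with $\xi\in uE^\infty$ and target in $Z(v)$ you extract $\beta$ with $r(\beta)=u$, $s(\beta)=w$; then $g$ with $d(g)=w$; then $\alpha$ with $r(\alpha)=v$, $s(\alpha)=t(g)$. Both paths have their ranges at $u$ and $v$ and their sources inside the configuration, so this data is a zigzag from $w$ to $u$ (the path $\beta$) together with a zigzag from $w$ to $v$ (the element $g$ followed by $\alpha$): it exhibits a common ``ancestor'' $w$ of $u$ and $v$, not a zigzag from $u$ to $v$, in which each path step must run coherently from the earlier to the later vertex of the chain. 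Passing to the inverse germ just interchanges the two sides and gives the same picture, so ``orbit equivalence is symmetric'' does not repair the direction mismatch; and the fallback you propose fails too, because zigzag reachability is a preorder, not an equivalence relation, so $E^0$ does not partition into ``zigzag-classes'' without symmetrizing (which destroys exactly the directedness at issue), and the set of paths eventually trapped in a class need not be closed. The gap is not cosmetic: your converse never uses finiteness of $E$ nor the existence of edges \emph{leaving} each vertex, yet consider the graph with vertices $x,w$, two loops at $w$, one edge with source $w$ and range $x$, and $G$ trivial. There every orbit is dense, but no edge and no groupoid element leaves $x$, so the required zigzag between $x$ and $w$ does not exist; hence no purely formal manipulation of a single germ can produce it. The missing idea --- the crux of the cited Exel--Pardo argument --- is to apply density not to an arbitrary $\xi\in uE^\infty$ but to $\xi=ccc\cdots$, where $c$ is a cycle lying at the end of a directed path leaving $u$ (finiteness of $E$, together with every vertex emitting an edge, as holds in the Exel--Pardo setting and in all the paper's examples, produces such a cycle). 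Then every vertex visited by $\xi$ is reachable from $u$ by a directed path, and the germ that density provides assembles with that path into an honest zigzag from $u$ through $w$ and $t(g)$ to $v$.
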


Recall that a circuit (also called a cycle or a loop) in $E$ is a  path $\alpha\in E^*$ with $|\alpha |\ge 1$ such that $s(\alpha)=r(\alpha)$. A $G$-circuit in $E$ is a pair $(g, \alpha)$ with $g\in G$ and $\alpha\in d(g)E^*$ with $|\alpha|\ge 1$ such that $s(\alpha)=g\cdot r(\alpha)$.

Note that a  circuit $\alpha$ in $E$ may be concatenated to produce an infinite path $\xi=\alpha\alpha\alpha\cdots$ in $E^\infty$. If $z=(\alpha, s(\alpha), s(\alpha))\in \Ss(G,E)$, then $z\cdot\xi=\xi$ since $r(\alpha)=s(\alpha)$. 

To create fixed points from $G$-circuits $(g,\alpha)$,
define a sequence $\{\alpha^n\}_{n\ge 1}$ of finite paths and a sequence $\{g_n\}_{n\ge 1}$ of groupoid elements recursively by $\alpha^1=\alpha, g_1=g$ and $\alpha^{n+1}=g_n\cdot\alpha^n$ where $g_{n+1}=g_n|_{\alpha^n}$ for $n\ge 1$. Then 
\[r(\alpha^2)=r(g\cdot \alpha)=g\cdot r(\alpha)=s(\alpha)\]
and assuming $r(\alpha^k)=s(\alpha^{k-1})$ for some $k\ge 2$, we get
\[r(\alpha^{k+1})=r(g_k\cdot\alpha^k)=g_k\cdot r(\alpha^k)=g_k\cdot s(\alpha^{k-1})=\]\[=g_{k-1}|_{\alpha^{k-1}}\cdot s(\alpha^{k-1})=s(g_{k-1}\cdot \alpha^{k-1})=s(\alpha^k).\]
The concatenation
$\xi=\alpha^1\alpha^2\alpha^3\cdots$ is an infinite path such that $g\cdot \xi=\alpha^2\alpha^3\alpha^4\cdots$. For any path $\beta\in E^*$ with $s(\beta)=r(\alpha)$, the infinite path $\beta\xi$ is fixed by $(\beta\alpha, g, \beta)\in \Ss(G,E)$ because
\[(\beta\alpha,g,\beta)\cdot \beta\xi=\beta\alpha(g\cdot\xi)=\beta\xi.\]

\begin{rmk}
Given $z=(\alpha, g, \beta)\in \Ss(G,E)$ with $|\alpha|>|\beta|$, then $z$ admits at most one fixed point in $E^\infty$ and assuming $z\cdot\xi=\xi$, then there is a $G$-circuit $(g,\gamma)$ such that $\alpha=\beta\gamma$ and $\xi=\beta\zeta$ with $\zeta$ constructed from $(g, \gamma)$.
\end{rmk}
Indeed, if $z\cdot\xi=\xi$, then $\xi\in Z(\beta)$ so $\xi=\beta\zeta$. Then
\[\beta\zeta=\xi=(\alpha, g, \beta)(\beta\zeta)=\alpha (g\cdot \zeta)\]
and since $|\alpha|>|\beta|$, we must have $\alpha=\beta\gamma$ such that $g\cdot r(\gamma)=g\cdot s(\beta)=s(\alpha)=s(\gamma)$, i.e. $(g,\gamma)$ is a G-circuit. Moreover, $\beta\zeta=\beta\gamma (g\cdot \zeta)$, so $\zeta=\gamma (g\cdot \zeta)$. If we write $\zeta=\gamma^1\gamma^2\gamma^3\cdots$ with $|\gamma^i|=|\gamma|$ for $i\ge 1$, then
\[\gamma^1\gamma^2\gamma^3\cdots=\gamma (g\cdot\zeta)=\gamma(g\cdot\gamma^1)(g|_{\gamma^1}\cdot\gamma^2)(g|_{\gamma^1}|_{\gamma^2}\cdot\gamma^3)\cdots.\]
It follows that $\gamma^1=\gamma$ and $\gamma^{n+1}=g_n\cdot \gamma^n$ where $g_1=g$ and $g_{n+1}=g_n|_{\gamma^n}$ for $n\ge 1$.

Using the methods of Theorem 14.10 and Corollary 14.13 in \cite{EP} to prove that the interior of the isotropy of $\Gg(G,E)$ is $E^\infty$, we obtain

\begin{thm} Suppose that we have a pseudo free self-similar action $(G,E)$. Then the  groupoid $\Gg(G,E)$ is effective if and only if

(a) every $G$-circuit has an entry (which is the same as saying that every circuit has an entry, since $E$ is finite);

(b) for every $g\in G\setminus G^{(0)}$ there is $\zeta\in Z(d(g))$ such that $g\cdot \zeta\neq \zeta$.

\end{thm}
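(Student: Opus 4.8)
The plan is to work directly from the definition: since $\Gg(G,E)^{(0)}$ is open in the \'etale groupoid, $\Gg(G,E)$ is effective exactly when no point of the isotropy bundle $\Gg(G,E)'$ lying outside the unit space is interior to $\Gg(G,E)'$. Because the sets $Z(\alpha,g,\beta)$ form a basis of compact open bisections, this is equivalent to: every basic bisection $Z(\alpha,g,\beta)\subseteq\Gg(G,E)'$ is already contained in $\Gg(G,E)^{(0)}$. First I would record the isotropy condition: from $d([\alpha,g,\beta;\beta\mu])=\beta\mu$ and $t([\alpha,g,\beta;\beta\mu])=\alpha(g\cdot\mu)$, the bisection $Z(\alpha,g,\beta)$ lies in $\Gg(G,E)'$ precisely when $\alpha(g\cdot\mu)=\beta\mu$ for every $\mu\in s(\beta)E^\infty$. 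I would then split the analysis by the value of the cocycle $\rho$, i.e.\ by whether $|\alpha|=|\beta|$, $|\alpha|>|\beta|$, or $|\alpha|<|\beta|$, the last reducing to the middle case by passing to the inverse bisection $Z(\beta,g^{-1},\alpha)$.

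For the implication $(a)\wedge(b)\Rightarrow$ effective, take a basic bisection $Z(\alpha,g,\beta)\subseteq\Gg(G,E)'$. If $|\alpha|=|\beta|$, then $\alpha(g\cdot\mu)=\beta\mu$ for all $\mu$ forces $\alpha=\beta$ and $g\cdot\mu=\mu$ for every $\mu\in s(\beta)E^\infty=Z(d(g))$; thus $g$ fixes all of $Z(d(g))$, so $(b)$ gives $g\in G^{(0)}$, whence $g=s(\beta)$ and the germs are units. If $|\alpha|>|\beta|$, write $\alpha=\beta\gamma$ with $|\gamma|\ge1$; the identity becomes $\gamma(g\cdot\mu)=\mu$ for all $\mu$, and comparing prefixes shows every infinite path issued from $s(\beta)=r(\gamma)$ must begin with $\gamma$. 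Iterating forces $Z(s(\beta))=\{\gamma\gamma\gamma\cdots\}$ to be a single point, which says exactly that the circuit $\gamma$ has no entry, contradicting $(a)$ (equivalently condition (L), under which $E^\infty$ is Cantor and hence perfect). So no basic isotropy bisection with $|\alpha|\ne|\beta|$ exists, and $\Gg(G,E)$ is effective. Throughout, pseudo-freeness is what guarantees such a germ is a unit only when $g$ is a unit, via the earlier remark that $[\alpha,g,\beta;\xi]=[\alpha,g',\beta;\xi]$ implies $g=g'$.

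For the converse I would exhibit explicit witnesses to non-effectiveness. If $(b)$ fails there is $g\in G\setminus G^{(0)}$ with $g\cdot\zeta=\zeta$ for all $\zeta\in Z(d(g))$; since $r(g\cdot\zeta)=g\cdot d(g)=t(g)$ and $g\cdot\zeta=\zeta$ give $t(g)=d(g)=:v$, the whole compact open bisection $Z(v,g,v)$ lies in $\Gg(G,E)'$, has cocycle $0$, and consists of non-units because $g\ne v$; this open set meets $\Gg(G,E)'\setminus\Gg(G,E)^{(0)}$, so the interior of $\Gg(G,E)'$ strictly contains $\Gg(G,E)^{(0)}$. If $(a)$ fails, choose a circuit $\gamma$ with no entry and $v=r(\gamma)=s(\gamma)$, so $\xi=\gamma\gamma\gamma\cdots$ is an isolated point of $E^\infty$; then the germ $[\gamma,v,v;\xi]$ satisfies $t=d=\xi$, has nonzero cocycle $|\gamma|$ (hence is not a unit), and is isolated in $\Gg(G,E)$, again producing interior isotropy outside the unit space.

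I expect the main obstacle to be the unequal-length case of the forward direction: converting the purely combinatorial requirement $\gamma(g\cdot\mu)=\mu$ for all $\mu$ into the topological statement that $\gamma$ has no entry, and cleanly invoking the equivalence of $(a)$ with condition (L) to conclude that $E^\infty$ is perfect. The reduction to basic bisections (via $Z(\alpha,g,\beta;U)=\bigsqcup_i Z(\alpha(g\cdot\gamma_i),g|_{\gamma_i},\beta\gamma_i)$) and the repeated use of pseudo-freeness to keep track of $g$ are routine but must be handled with care so that \emph{isotropy} is never silently upgraded to \emph{unit}.
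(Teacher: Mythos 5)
Your overall reduction is sound: effectiveness is equivalent to the statement that every basic bisection $Z(\alpha,g,\beta)$ contained in the isotropy bundle already lies in $\Gg(G,E)^{(0)}$, and your treatment of the case $|\alpha|=|\beta|$ and of both halves of the converse is correct. In particular, in the converse you invoke pseudo-freeness at exactly the right moment: for a non-unit $g$ there is no $\mu\in d(g)E^*$ with $g\cdot\mu=\mu$ and $g|_\mu=s(\mu)$, so no germ of $(t(g),g,d(g))$ can equal the germ of an idempotent, and the cylinder of fixed points gives open isotropy outside the units. This is the same mechanism as the paper's argument (phrased there via interior versus trivial fixed points of the $\Ss(G,E)$-action), only more direct.

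The genuine gap is in the step you yourself flagged as the main obstacle, the case $|\alpha|>|\beta|$. From $\gamma(g\cdot\mu)=\mu$ for all $\mu\in Z(r(\gamma))$ you ``iterate'' to conclude $Z(s(\beta))=\{\gamma\gamma\gamma\cdots\}$. That iteration is invalid. The hypothesis says that paths based at $r(\gamma)=d(g)$ begin with $\gamma$; to apply it a second time you would need paths based at $s(\gamma)=t(g)$ to begin with $\gamma$, but $(g,\gamma)$ is only a $G$-circuit, not a circuit: in general $t(g)\neq d(g)$, so $\gamma\gamma$ is not even a composable path (take two vertices $u,v$ joined by single edges in both directions and $g\in G^v_u$; then $\gamma$ is one edge and $\gamma\gamma$ does not exist). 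The second block is produced by the self-similar structure, not by repetition: from $\mu=\gamma(g\cdot\mu)$ one gets $\nu=(g\cdot\gamma)(g|_\gamma\cdot\nu)$ for every $\nu=g\cdot\mu\in Z(t(g))$, and inductively $Z(d(g))$ collapses to the single point $\alpha^1\alpha^2\alpha^3\cdots$ with $\alpha^1=\gamma$, $\alpha^{n+1}=g_n\cdot\alpha^n$, $g_{n+1}=g_n|_{\alpha^n}$. This is exactly the fixed point the paper constructs from a $G$-circuit, and the paper's Remark that $z=(\alpha,g,\beta)$ with $|\alpha|>|\beta|$ has at most one fixed point is the ingredient your sketch is missing: since all of $Z(\beta)$ is fixed, $Z(\beta)$ must be that singleton.

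If you prefer to avoid the recursive construction, the step can be repaired directly: if $\mu\neq\mu'$ in $Z(d(g))$ first differ in their $k$-th edge, then $g\cdot\mu$ and $g\cdot\mu'$ also first differ in their $k$-th edge (the action is prefix-preserving and bijective on each $d(g)E^n$), whereas $\mu=\gamma(g\cdot\mu)$ and $\mu'=\gamma(g\cdot\mu')$ force their first difference to occur in the $(k-|\gamma|)$-th edge; hence $|\gamma|=0$, a contradiction, so $Z(d(g))$ is a singleton. Either way, what you obtain is an isolated point of $E^\infty$, \emph{not} the assertion that ``the circuit $\gamma$ has no entry'' --- $\gamma$ need not be closed, so that phrase is a misnomer. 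To contradict (a) you must still pass from the isolated point to a genuine circuit without entry, using finiteness of $E$ (equivalently, the parenthetical in (a): condition (L) fails if and only if $E^\infty$ has an isolated point). With these two repairs your proof is complete and is essentially the paper's argument, written through isotropy bisections rather than fixed points of the inverse semigroup action.
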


\begin{cor} If $\Gg(G,E)$ is Hausdorff, effective and minimal, then $\Gg(G,E)$ is purely infinite.
\end{cor}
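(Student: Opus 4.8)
The plan is to establish pure infiniteness for the reduced groupoid $C^*$-algebra $C^*_r(\Gg(G,E))$ by verifying the hypotheses of a known structural dichotomy. The final corollary asserts that if $\Gg(G,E)$ is Hausdorff, effective, and minimal, then it is purely infinite. The natural route is to invoke the result (see Brown--Clark--Farthing--Sims, or Bönicke--Li, or the groupoid pure infiniteness criteria of Matui and of Rainone--Sims) that a minimal, effective, ample (hence locally compact Hausdorff étale totally disconnected) groupoid whose unit space has no isolated points, and which admits a suitable ``paradoxical'' or ``local contraction'' structure, has a purely infinite reduced $C^*$-algebra. So first I would check that $\Gg(G,E)$ is ample with totally disconnected unit space $E^\infty$, which follows from the basis of compact open bisections $Z(\alpha,g,\beta)$ already constructed in the proof of the isomorphism theorem.

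Next I would reduce pure infiniteness to a \emph{purely infinite on its unit space} condition: for every nonempty compact open subset $V\subseteq E^\infty$, I want to produce a compact open bisection $Z\subseteq \Gg(G,E)$ with $d(Z)=V$ and $t(Z)\subsetneq V$, i.e.\ a way to ``compress'' $V$ into a proper subset of itself inside its own orbit. Concretely, since the cylinder sets $Z(\beta)$ form a basis, it suffices to handle $V=Z(\beta)$. Because $E$ has no sources and $\Gg(G,E)$ is minimal, every vertex lies on a circuit or can reach one; using effectiveness condition (a), every circuit has an entry, so the shift map associated to the graph groupoid $\Gg_E$ (an open subgroupoid of $\Gg(G,E)$ with the same unit space) already gives genuine expansions and compressions. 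The standard fact that a graph satisfying condition (L) with an irreducible-type structure yields a purely infinite Cuntz--Krieger algebra is the model: I would exhibit, for a circuit $\alpha$ with an entry $e$ at some vertex of $\alpha$, the bisection $Z(\alpha,s(\alpha),r(\alpha))$-type element realizing $Z(r(\alpha))\to Z(\alpha)\subsetneq Z(r(\alpha))$ properly, since the entry guarantees $Z(\alpha)$ is a proper clopen subset of $Z(r(\alpha))$.

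The main technical step is to globalize this local compression using minimality: given any nonempty compact open $V$, minimality lets me move $V$ via a bisection into the basic cylinder where the compression lives, then compress, then move back, producing $Z$ with $d(Z)=V$ and $t(Z)$ a proper clopen subset of $V$. Combined with effectiveness (which ensures the reduced and full algebras behave well and that there are ``enough'' germs distinguishing isotropy), this verifies the hypotheses of the pure-infiniteness criterion, and the conclusion follows. I expect the main obstacle to be the careful bookkeeping at the groupoid level: since the source map is not $G$-equivariant here, the compression bisection must be written using the germ description $[\alpha,g,\beta;\xi]$ and the multiplication rule from the isomorphism theorem, and one must confirm that the resulting $Z$ is genuinely a compact open bisection with the claimed proper image. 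This is where condition (b) of effectiveness and the existence of circuit entries from condition (a) must be combined cleanly, rather than in the pure calculation, which is routine once the right bisection is identified.
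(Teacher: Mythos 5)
Your route is not the paper's. The paper's entire proof is the observation that the graph groupoid $\Gg_E$ is an open ample subgroupoid of $\Gg(G,E)$ with the same unit space $E^\infty$ and is purely infinite, so the compact open bisections witnessing proper infiniteness of clopen sets inside $\Gg_E$ already witness it inside $\Gg(G,E)$. Your plan instead works directly in $\Gg(G,E)$, and its central step has a genuine gap: ``minimality lets me move $V$ via a bisection into the basic cylinder where the compression lives, then compress, then move back.'' Minimality of an ample groupoid does \emph{not} allow one to map an arbitrary nonempty compact open set into an arbitrary cylinder by a bisection; that assertion is precisely dynamical comparison, which is essentially the property being proved. (Cantor minimal $\ZZ$-systems are minimal, effective and ample, yet the invariant probability measure prevents any bisection from compressing the unit space into a small cylinder.) What makes the step work here is not minimality in the abstract but its concrete form established earlier in the paper: minimality of $\Gg(G,E)$ means $E$ is $G$-transitive. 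From $G$-transitivity, any two vertex cylinders $Z(u)$, $Z(w)$ are mutually subequivalent (compose bisections $Z(\mu, s(\mu),s(\mu))$ coming from paths with bisections $Z(t(g),g,d(g))$ coming from $G$), and every cylinder $Z(\beta)$ is equivalent to $Z(s(\beta))$. Then, for a circuit $\alpha$ with entry $e$ occurring after the initial segment $\beta$ of $\alpha$, the cylinders $Z(\alpha^k\beta e)$, $k\ge 0$, are pairwise disjoint compact open subsets of $Z(r(\alpha))$, each equivalent to $Z(s(e))$, hence each containing a copy of $Z(r(\alpha))$ by mutual subequivalence of vertex cylinders. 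This packing of arbitrarily many disjoint copies of $Z(r(\alpha))$ into itself is the missing ingredient: it is what lets you disjointify the images of the finitely many partial bisections that a compactness-plus-minimality covering of $V$ produces (those images overlap in general), and it is what justifies the ``move back.''

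A second, related defect is that the criterion you reduce to is too weak: a single bisection $Z$ with $d(Z)=V$ and $t(Z)\subsetneq V$ only shows $V$ is infinite, whereas pure infiniteness of the \emph{groupoid} in Matui's sense --- which is what this corollary must deliver, since the paper later uses it via \cite{M15} to get surjectivity of the index map and a copy of $\ZZ_2\ast\ZZ_3$ in $\ldbrack\Gg(G,E)\rdbrack$ --- requires, for every nonempty clopen $V$, \emph{two} bisections $U_1,U_2$ with $d(U_1)=d(U_2)=V$ and with $t(U_1)$, $t(U_2)$ disjoint subsets of $V$; the locally-contracting criteria you invoke conclude pure infiniteness of $C^*_r$, not of the groupoid. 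Both defects are cured simultaneously by the packing argument above. One point in your favor: insisting that minimality of $\Gg(G,E)$ (and not just properties of $E$) must enter is sound, because when $E$ is not strongly connected the source of the entry $e$ may be reachable from $r(\alpha)$ only through $G$, and then $\Gg_E$ alone need not be purely infinite; so your approach, once the comparison step is replaced by the $G$-transitivity/packing argument, covers the general statement, whereas as written its key step fails.
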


\begin{proof} This follows since the open ample subgroupoid $\Gg_E$ of $\Gg(G,E)$ is purely infinite.
\end{proof}
For example, the groupoid $\Gg(G,E)$ obtained from Example \ref{forest} is minimal and effective. In particular, its $C^*$-algebra is simple and purely infinite.

\bigskip

\bigskip

\section{Tables and the Higman-Thompson groups}

\bigskip

Consider a finite directed graph $E$ with no sources such that $E^\infty$ is a Cantor set. We recall first some facts about the Higman-Thompson group of $E$.

\begin{dfn} A table over the graph $E$ is a matrix of the form
\[\tau=\left(\begin{array}{cccc} \alpha_1&\alpha_2&\cdots&\alpha_m\\\beta_1&\beta_2&\cdots&\beta_m\end{array}\right),\]
where $\alpha_i, \beta_i\in E^*$ are such that   $s(\alpha_i)=s(\beta_i)$ and  $E^\infty$ is decomposed into disjoint unions
\[E^\infty=\bigsqcup_{i=1}^mZ(\alpha_i)=\bigsqcup_{j=1}^mZ(\beta_j),\]
i.e. for every infinite path $\mu\in E^\infty$ exactly one $\alpha_i$ and exactly one $\beta_j$ is a prefix of $\mu$. 
\end{dfn}

It follows that the finite paths in each row of a table are incomparable (none of them is a begining of the other) and the corresponding cylinder sets $Z(\alpha_i)$ and $Z(\beta_j)$ are indeed disjoint. Every table $\tau$ defines a homeomorphism $\bar{\tau}$ of $E^\infty$ by the rule 
\[\bar{\tau}(\beta_i\mu)=\alpha_i\mu, \;\text{ for }\; i=1,...,m.\]
This homeomorphism does not change if we permute the columns of $\tau$. Its inverse is given by
\[\tau^{-1}=\left(\begin{array}{cccc}\beta_1&\beta_2&\cdots&\beta_m\\ \alpha_1&\alpha_2&\cdots&\alpha_m\end{array}\right).\]
A split of a table is a matrix obtained from the original one by several replacements of a column $\left(\begin{array}{c}\alpha_i\\\beta_i\end{array}\right)$ by a matrix
\[\left(\begin{array}{cccc}\alpha_ie_1&\alpha_ie_2&\cdots&\alpha_ie_d\\ \beta_ie_1&\beta_ie_2&\cdots&\beta_ie_d\end{array}\right),\]
where $s(\alpha_i)E^1=\{e_1, e_2,...,e_d\}=s(\beta_i)E^1$. It is easy to check that a split of a table is also a table, and two tables $\tau_1,\tau_2$ define the same homeomorphisms of $E^\infty$ if they have splits which agree up to a permutation of the columns.

The set of all homeomorphisms of $E^\infty$ defined by such tables is a subgroup of Homeo$(E^\infty)$, called the Higman-Thompson group of $E$ and denoted by $V_E$. In \cite{MM}, the authors define tables associated to a one-sided topological Markov shift $(X_A,\sigma_A)$ for an irreducible $N\times N$ matrix $A$ with entries in $\{0, 1\}$. They studied the corresponding Higman-Thompson group and proved that it is isomorphic to the topological full group of the  groupoid associated to $(X_A,\sigma_A)$. Their results were generalized for graph groupoids in \cite{NO}. 

\begin{rmk}
The correspondence \[\tau=\left(\begin{array}{cccc} \alpha_1&\alpha_2&\cdots&\alpha_m\\\beta_1&\beta_2&\cdots&\beta_m\end{array}\right)\mapsto T=S_{\alpha_1}S_{\beta_1}^*+S_{\alpha_2}S_{\beta_2}^*+\cdots +S_{\alpha_m}S_{\beta_m}^*\] defines a faithful unitary representation of the group $V_E$ in the $C^*$-algebra of the graph $E$. Moreover, \[T\in N(C(E^\infty), C^*(E))=\{u\in U(C^*(E)): uC(E^\infty)u^*=C(E^\infty)\}\] and $f\circ \bar{\tau}^{-1}=T fT^*$ for $f\in C(E^\infty)$.
\end{rmk}

\begin{proof}

Indeed, as in \cite{MM}, $T=S_{\alpha_1}S_{\beta_1}^*+S_{\alpha_2}S_{\beta_2}^*+\cdots +S_{\alpha_m}S_{\beta_m}^*$ is a unitary in $C^*(E)$ since $\ds E^\infty=\bigsqcup_{i=1}^mZ(\alpha_i)=\bigsqcup_{i=1}^mZ(\beta_i)$ and therefore \[T^*T=(\sum_{i=1}^mS_{\beta_i}S_{\alpha_i}^*)(\sum_{j=1}^mS_{\alpha_j}S_{\beta_j}^*)=\sum_{i=1}^mS_{\beta_i}S_{\beta_i}^*=\sum_{v\in E^0}P_v=I,\]
\[TT^*=(\sum_{i=1}^mS_{\alpha_i}S_{\beta_i}^*)(\sum_{j=1}^mS_{\beta_j}S_{\alpha_j}^*)=\sum_{i=1}^mS_{\alpha_i}S_{\alpha_i}^*=\sum_{v\in E^0}P_v=I.\]
For $f=\chi_{Z(\eta)}=S_\eta S_\eta^*\in C(E^\infty)$ a computation shows that $T\chi_{Z(\eta)}T^*=\chi_{Z(\eta)}\circ \bar{\tau}^{-1}$.
\end{proof}
\begin{example}
For the graph $E$ in Example \ref{ex}, a table is 
\[\tau=\left(\begin{array}{cccc}u&e_4&e_5&v\\u&e_5&e_4&v\end{array}\right),\]
with split
\[\tau'=\left(\begin{array}{ccccc}u&e_4e_2&e_4e_6&e_5&v\\u&e_5e_2&e_5e_6&e_4&v\end{array}\right).\]
\end{example}
Consider now   a self-similar groupoid action $(G, E)$.  Assume that $|uE^1|=d$ is constant for all $u\in E^0$.
\begin{dfn}
A $G$-table is a matrix of the form
\[\tau=\left(\begin{array}{cccc}  \alpha_1&\alpha_2&\cdots&\alpha_m\\g_1&g_2&\cdots &g_m\\\beta_1&\beta_2&\cdots&\beta_m\end{array}\right),\]
where $g_i\in G_{s(\beta_i)}^{s(\alpha_i)}$  and $E^\infty=\bigsqcup_{i=1}^mZ(\alpha_i)=\bigsqcup_{i=1}^mZ(\beta_i)$.

A $G$-table $\tau$ determines a homeomorphism $\bar{\tau}$ of $E^\infty$ taking $\beta_i\mu$ into $\alpha_i(g_i\cdot \mu)$. This homeomorphism does not change if we permute the columns of $\tau$. The set of all homeomorphisms defined by such tables is a subgroup of Homeo$(E^\infty)$, called the Higman-Thompson group of $(G,E)$ and denoted by $V_E(G)$. 

\end{dfn}
The inverse of $\bar{\tau}$ corresponds to the table
\[\tau^{-1}=\left(\begin{array}{cccc} \beta_1&\beta_2&\cdots&\beta_m \\g_1^{-1}&g_2^{-1}&\cdots &g_m^{-1}\\\alpha_1&\alpha_2&\cdots&\alpha_m\end{array}\right).\]

The splitting rule for $G$-tables will replace a column $\ds\left(\begin{array}{c}\alpha\\g\\\beta\end{array}\right)$ by the matrix
\[\left(\begin{array}{cccc} \alpha f_1&\alpha f_2&\cdots&\alpha f_d\\h_1&h_2&\cdots&h_d\\\beta e_1&\beta e_2&\cdots&\beta e_d\end{array}\right),\]
where  $s(\alpha)E^1=\{f_1,f_2,...,f_d\}, s(\beta)E^1=\{e_1,e_2,...,e_d\}$, $g\cdot e_j=f_j$ and $ h_j=g|_{e_j}$ for all $j=1,...,d$. It is easy to check that a split of a table is also a table, and two tables $\tau_1,\tau_2$ define the same homeomorphisms of $E^\infty$ if they have splits which agree up to a permutation of the columns.

\begin{lem}
Given a $G$-table $\ds \tau=\left(\begin{array}{cccc}  \alpha_1&\alpha_2&\cdots&\alpha_m\\g_1&g_2&\cdots &g_m\\\beta_1&\beta_2&\cdots&\beta_m\end{array}\right)$, the correspondence \[\tau\mapsto T=S_{\alpha_1}U_{g_1}S_{\beta_1}^*+S_{\alpha_2}U_{g_2}S_{\beta_2}^*+\cdots +S_{\alpha_m}U_{g_m}S_{\beta_m}^*\] defines a faithful unitary representation of the group $V_E(G)$ in the $C^*$-algebra $C^*(G,E)$.

\end{lem}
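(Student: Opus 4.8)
The plan is to verify three things in sequence: that $T$ is a unitary in $C^*(G,E)$, that $\tau\mapsto T$ respects the group operation (so it is a homomorphism $V_E(G)\to U(C^*(G,E))$), and that it is injective. For the first point, I would mimic the computation in the plain-table case recorded in the earlier remark, but now the middle factor carries the unitaries $U_{g_i}$. The key algebraic facts I will lean on are $S_e^*S_e=P_{s(e)}$ together with the relation $U_gP_v=P_{g\cdot v}U_g$ and $U_g^*U_g=P_{d(g)}$, $U_gU_g^*=P_{t(g)}$, which follow from $g\mapsto U_g$ being a representation by partial isometries with $U_v=P_v$. First I would compute $T^*T$. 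A cross term $S_{\beta_i}U_{g_i}^*S_{\alpha_i}^*S_{\alpha_j}U_{g_j}S_{\beta_j}^*$ vanishes unless $\alpha_i$ and $\alpha_j$ are comparable; because the $\alpha$'s are mutually incomparable (they index a partition $E^\infty=\bigsqcup Z(\alpha_i)$), $S_{\alpha_i}^*S_{\alpha_j}=\delta_{ij}P_{s(\alpha_i)}$, leaving the diagonal terms $S_{\beta_i}U_{g_i}^*P_{s(\alpha_i)}U_{g_i}S_{\beta_i}^*$. Since $g_i\in G_{s(\beta_i)}^{s(\alpha_i)}$, we have $t(g_i)=s(\alpha_i)$, so $U_{g_i}^*P_{s(\alpha_i)}U_{g_i}=U_{g_i}^*U_{g_i}=P_{s(\beta_i)}=S_{\beta_i}^*S_{\beta_i}$, and the diagonal term collapses to $S_{\beta_i}S_{\beta_i}^*=\chi_{Z(\beta_i)}$. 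Summing and using $E^\infty=\bigsqcup Z(\beta_i)$ gives $T^*T=\sum_{v\in E^0}P_v=I$. The computation of $TT^*=I$ is the mirror image, using incomparability of the $\beta$'s and $d(g_i)=s(\beta_i)$.

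Next I would show multiplicativity. Given two $G$-tables $\tau,\sigma$ with associated unitaries $T,T'$, I want $TT'$ to be the unitary associated to the $G$-table representing $\bar\tau\circ\bar\sigma$. The natural route is to first observe that any two tables can be commonly refined: by the splitting rule, I can split $\sigma$ so that each of its top entries is long enough to lie entirely inside a single cylinder $Z(\beta_i)$ of $\tau$, and split $\tau$ correspondingly. This reduces the product to the case where the bottom row of $\tau$ matches the top row of $\sigma$ up to a permutation of columns, and then the product of the two sums telescopes using $S_{\alpha}^*S_{\alpha}=P_{s(\alpha)}$ and the relation $U_gS_e=S_{g\cdot e}U_{g|_e}$ (extended to paths as $U_gS_\mu=S_{g\cdot\mu}U_{g|_\mu}$, which follows from Theorem \ref{gen} together with the restriction identities (1)--(4)). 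Concretely, the term-by-term multiplication $S_{\alpha_i}U_{g_i}S_{\beta_i}^*\cdot S_{\beta_i}U_{h}S_{\zeta}^*$ simplifies to $S_{\alpha_i}U_{g_ih}S_{\zeta}^*$, which is exactly the entry predicted by the composition of homeomorphisms and matches the inverse-semigroup product $\Ss(G,E)$ introduced earlier. I would also record that $T^*$ is the unitary associated to $\tau^{-1}$, which is immediate from the formula for $\tau^{-1}$ and the partial-isometry relations.

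For injectivity, I would show $f\circ\bar\tau^{-1}=TfT^*$ for all $f\in C(E^\infty)$, just as in the plain case. It suffices to check this on generators $f=\chi_{Z(\eta)}=S_\eta S_\eta^*$: one splits $\tau$ until each $\beta_i$ either contains $\eta$ or is disjoint from it, and then a direct computation using $U_gS_\mu=S_{g\cdot\mu}U_{g|_\mu}$ yields $T\chi_{Z(\eta)}T^*=\chi_{\bar\tau(Z(\eta))}=\chi_{Z(\eta)}\circ\bar\tau^{-1}$. This conjugation formula shows that $T$ implements the homeomorphism $\bar\tau$ on the commutative subalgebra $C(E^\infty)$, so if $T=I$ then $\bar\tau$ acts trivially on $C(E^\infty)$ and hence $\bar\tau=\mathrm{id}$; thus the representation is faithful. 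As a byproduct this places $T$ in the normalizer $N(C(E^\infty),C^*(G,E))$.

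I expect the main obstacle to be the multiplicativity step, specifically the bookkeeping in the common refinement of two $G$-tables. Unlike the group-action case in \cite{EP}, the source map is not $G$-equivariant, so when I split a column the restrictions $g|_{e_j}$ genuinely change the groupoid element and one must check that $s(g\cdot e_j)=t(g|_{e_j})$ and $d(g|_{e_j})=s(e_j)$ line up so that the new triples are again legitimate $G$-table entries with $h_j\in G_{s(\beta e_j)}^{s(\alpha f_j)}$. Keeping these domain/target constraints consistent through repeated splits — and confirming that the split of a $G$-table induces the \emph{same} homeomorphism $\bar\tau$, which is where the identity $g\cdot(e\mu)=(g\cdot e)(g|_e\cdot\mu)$ does the real work — is the delicate part; the unitarity and injectivity arguments are then essentially formal consequences of the relations in Theorem \ref{gen}.
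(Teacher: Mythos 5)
Your proposal is correct, and where it overlaps with the paper it is the same argument: the paper's entire proof consists of the two displayed computations showing that $T$ times the operator of $\tau^{-1}$ equals $I$ on both sides, which is exactly your computation of $T^*T$ and $TT^*$ (the paper suppresses the justifications you supply, namely that cross terms vanish by incomparability of the $\alpha_i$, resp.\ the $\beta_i$, and that $U_{g_i}^*P_{s(\alpha_i)}U_{g_i}=P_{s(\beta_i)}$ collapses the diagonal terms). What you add is everything the paper leaves implicit: multiplicativity, via common refinement of two $G$-tables using the splitting rule together with $U_gS_\mu=S_{g\cdot\mu}U_{g|_\mu}$, and faithfulness, via the conjugation formula $T\chi_{Z(\eta)}T^*=\chi_{Z(\eta)}\circ\bar{\tau}^{-1}$, which mirrors the paper's earlier remark for plain tables; both of these steps check out, including the domain/target bookkeeping $h_j=g|_{e_j}\in G_{s(\beta e_j)}^{s(\alpha f_j)}$ that you correctly flag as the place where the failure of $G$-equivariance of the source map matters. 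One point that you and the paper both leave slightly under-addressed is well-definedness: that two $G$-tables inducing the same homeomorphism yield the same operator $T$. Your verification that splitting a column leaves $T$ unchanged is precisely the needed ingredient, since by the paper's definition two tables define the same homeomorphism exactly when they admit splits agreeing up to a permutation of columns; it would be worth stating this consequence explicitly, because your injectivity argument ($T=I\Rightarrow\bar{\tau}=\mathrm{id}$) establishes triviality of the kernel but does not by itself show the map is well defined on $V_E(G)$.
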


\begin{proof}
This follows from the  computations 
\[(\sum_{i=1}^mS_{\alpha_i}U_{g_i}S_{\beta_i}^*)(\sum_{j=1}^mS_{\beta_j}U^{-1}_{g_j}S_{\alpha_i}^*)=\sum_{i=1}^mS_{\alpha_i}S_{\alpha_i}^*=\sum_{v\in E^0}P_v=I,\]
\[(\sum_{i=1}^mS_{\beta_i}U_{g_i}^{-1}S_{\alpha_i}^*)(\sum_{j=1}^mS_{\alpha_j}U_{g_j}S_{\beta_j}^*)=\sum_{i=1}^mS_{\beta_i}S_{\beta_i}^*=\sum_{v\in E^0}P_v=I.\]

\end{proof}
\begin{rmk}
The group $V_E$ can be identified with a subgroup of $V_E(G)$ such that
\[\left(\begin{array}{cccc} \alpha_1&\alpha_2&\cdots&\alpha_m\\\beta_1&\beta_2&\cdots&\beta_m\end{array}\right)\mapsto \left(\begin{array}{cccc} \alpha_1&\alpha_2&\cdots&\alpha_m\\s(\alpha_1)&s(\alpha_2)&\cdots &s(\alpha_m)\\\beta_1&\beta_2&\cdots&\beta_m\end{array}\right).\]
\end{rmk}
\begin{example}

For the groupoid action in Example \ref{forest}, a $G$-table is
\[\tau=\left(\begin{array}{cccc} u & v& e_4&e_5\\a^{-1}&cb&cb&a\\e_4&e_5&v&u\end{array}\right)\]
with split
\[\tau'=\left(\begin{array}{ccccc} e_1&e_3 & v& e_4&e_5\\u &b^{-1}&cb&cb&a\\e_4e_2&e_4e_6&e_5&v&u\end{array}\right).\]

\end{example}

\bigskip

\bigskip

\section{Topological full groups and the AH conjecture}
\bigskip

The study of full groups in the setting of topological dynamics was initiated by T. Giordano, I. F. Putnam and C. F. Skau in \cite{GPS}. For a minimal action $\varphi:\ZZ\curvearrowright X$ on a Cantor set, they defined several types of full groups and showed that these groups completely determine the orbit equivalence class, the strong orbit equivalence class and the flip conjugacy class of $\varphi$, respectively. Topological full groups associated to dynamical systems and  to \'etale groupoids are  complete invariants for continuous orbit equivalence and groupoid isomorphism.
They also provide means of constructing new groups with interesting properties, most notably by providing the first examples of finitely generated infinite simple groups that are amenable.
\begin{example}
For the  $AF$-equivalence relation associated to a Bratteli diagram $(V,E)$ with levels $(V_m, E_m)$, the topological full group is an increasing union of groups of the form $\ds \bigoplus_{v\in V_m}S_{n(v)}$, where  $n(v)$ denotes the number of paths from the top vertex $v_0$ to $v\in V_m$ and $S_n$ is the symmetric group, see Proposition 3.3 in \cite{M06}.
\end{example}
\begin{dfn} A second countable groupoid $G$ is ample if it is \'etale and $G^{(0)}$ is zero-dimensional; equivalently, $G$ is ample if it has a basis of compact open bisections. An ample  groupoid $G$ is elementary if it is compact and principal. $G$  is an $AF$ groupoid if there exists an ascending chain of open elementary subgroupoids $G_1\subseteq  G_2\subseteq ...\subseteq G$ such that $G = \bigcup_{i=1}^\infty G_i$.

\end{dfn}

We recall now the definion of homology of \'etale groupoids which was introduced by Crainic and Moerdijk in \cite{CM}. Let $A$ be an abelian group and
let $\pi: X \to Y$ be a local homeomorphism between two locally compact Hausdorff spaces. Given any $f \in C_c(X, A)$ we define
\[\pi_*(f)(y):= \sum_{\pi(x)=y} f(x).\]

It follows that $\pi_*(f)\in C_c(Y,A)$.
Given an \' etale groupoid $G$, let $G^{(1)}=G$ and for $n\ge 2$ let $G^{(n)}$ be the space of composable strings
of $n$ elements in $G$ with the product topology. For $n\ge 2$ and  $ i = 0,...,n$, we let $\partial_i : G^{(n)} \to G^{(n-1)}$ be the face maps defined by

\[\partial_i(g_1,g_2,...,g_n)=\begin{cases}(g_2,g_3,...,g_n)&\;\text{if}\; i=0,\\
 (g_1,...,g_ig_{i+1},...,g_n) &\;\text{if} \;1\le i\le n-1,\\
(g_1,g_2,...,g_{n-1})& \;\text{if}\; i = n.\end{cases},\]
which are local homeomorphisms.
We define the homomorphisms $\delta_n : C_c(G^{(n)}, A) \to C_c(G^{(n-1)}, A)$ given by 
\[\delta_1=d_*-t_*,\;\; \delta_n=\sum_{i=0}^n(-1)^i\partial_{i*}\; \text{for}\; n\ge 2.\]
Recall that $d,t:G\to G^{(0)}$ are the domain and target maps.
It can be verified that $\delta_n\circ\delta_{n+1}=0$ for all $n\ge 1$.

The homology groups $H_n(G, A)$ are by definition  the homology groups of the chain complex $C_c(G^{(*)},A)$ given by
\[
0\stackrel{\delta_0}{\longleftarrow} C_c(G^{(0)},A)\stackrel{\delta_1}{\longleftarrow}C_c(G^{(1)},A)\stackrel{\delta_2}{\longleftarrow}C_c(G^{(2)},A)\longleftarrow\cdots,\]
i.e. $H_n(G,A)=\ker \delta_n/\text{im } \delta_{n+1}$, where $\delta_0=0$.
If $A=\ZZ$, we write $H_n(G)$ for $H_n(G,\ZZ)$.
\begin{example}
For the transformation groupoid $\Gamma \ltimes X$ associated to a countable discrete group action $\Gamma \curvearrowright X$ on a Cantor set, it follows that \[H_n(\Gamma\ltimes X)\cong H_n(\Gamma, C(X,\ZZ)),\] the group homology of $\Gamma$ with coefficients in $C(X,\ZZ)$.
\end{example}
If $U\subseteq G$  is  a bisection of an \'etale groupoid $G$, then  $t|_U:  U \to t(U)$ is a homeomorphism, and similarly for $d$. Thus we get a homeomorphism $\pi_U := t|_U \circ (d|_U)^{-1}$ from $d(U)$ to $t(U)$ which maps $d(g)$ to $t(g)$ for each $g\in  U$. We say that the bisection $U$ is full if $t(U) = d(U) = G^{(0)}$, and in this case $\pi_U$ is a homeomorphism of $G^{(0)}$. For a homeomorphism $\alpha: X \to X$ of a topological space $X$ we define the support of $\alpha$ to be the set 
\[\text{supp}(\alpha) :=\overline{ \{x\in X \mid  \alpha(x)\neq x\}}.\]
\begin{dfn}
The topological full group of an effective \'etale groupoid $G$ is
\[\ldbrack G\rdbrack:= \{\pi_U \mid  U \subseteq G\;\text {full bisection} \},\]
which is a subgroup of the homeomorphism group of $G^{(0)}$ such that $\pi_U\circ \pi_V=\pi_{UV}$. The commutator subgroup of $\ldbrack G\rdbrack$ is denoted by $D(\ldbrack G\rdbrack)$ and the abelianization of $\ldbrack G\rdbrack$ is $\ldbrack G\rdbrack_{ab}=\ldbrack G\rdbrack/D(\ldbrack G\rdbrack)$.
\end{dfn}

 We observe that when $G$ is effective and Hausdorff, then $\text{supp}(\pi_U)$ is also open for any full bisection $U$. 
\begin{rmk}
For a second countable  effective ample groupoid $G$, the group $\ldbrack G\rdbrack$ is at most countable and there is a natural short exact sequence
\[1\to U(C(G^{(0)}))\to N(C(G^{(0)}),C^*_r(G))\to \ldbrack G\rdbrack\to 1,\]
where $N(C(G^{(0)}),C^*_r(G))$ denotes the group of unitary normalizers of $C(G^{(0)})$ in $C^*_r(G)$ (Proposition 5.6 in \cite{M12}).
\end{rmk}

Given a bisection $U\subseteq G$ with $t(U)\cap d(U)=\emptyset$, construct
\[\hat{U}:=U\sqcup U^{-1}\sqcup (G^{(0)}\setminus (t(U)\cup d(U))),\]
which is a full bisection. The associated homeomorphism $\pi_{\hat{U}}\in \ldbrack G\rdbrack$ satisfies 
\[\pi_{\hat{U}}(d(U))=t(U),\; \pi_{\hat{U}}(t(U))=d(U),\]\[\text{supp}(\pi_{\hat{U}})=t(U)\cup d(U),\;\pi_{\hat{U}}\circ \pi_{\hat{U}}=id_{G^{(0)}}.\]
An element $\pi\in \ldbrack G\rdbrack$ that satisfies $\pi^2=id_{G^{(0)}}$ is called a transposition.

For an effective minimal second countable Hausdorff ample groupoid $G$ with $G^{(0)}$ a Cantor set, the AH-conjecture of Matui claims that the following sequence is exact \[H_0(G)\otimes \ZZ_2\stackrel{j}{\longrightarrow } \ldbrack G\rdbrack_{ab}\stackrel{I_{ab}}{\longrightarrow }H_1(G)\to 0.\]
In particular, if the group $H_0(G)$ is $2$-divisible, then this implies that $\ldbrack G\rdbrack_{ab}\cong H_1(G)$.
The  map $I_{ab}$ is induced by the index map $I:\ldbrack G\rdbrack\to H_1(G)$ given by $\pi_U\mapsto [\chi_U]$, where $U$ is a full bisection of $G$ and $\chi_U$ is the indicator function of $U$, which belongs to $\ker \delta_1=\ker(d_*-t_*)$. The map $j$ takes $[\chi_{d(U)}]\otimes \bar{1}$ into $[\pi_{\hat{U}}]$, where $U\subseteq G$ is a bisection with $d(U)\cap t(U)=\emptyset$ and $\pi_{\hat{U}}\in \ldbrack G\rdbrack$ is the transposition defined above.

\begin{rmk}
The AH-conjecture has been verified for several cases of groupoids, but so far no counter-example has been found. It is shown by H. Matui in \cite{M15} that if the groupoid $G$ is  purely infinite, then the index map $I:\ldbrack G\rdbrack\to H_1(G)$ is surjective. Also, in this case, $\ldbrack G\rdbrack$ contains a copy of $\ZZ_2\ast \ZZ_3$, so $\ldbrack G\rdbrack$ is not amenable.
\end{rmk}

Denote by $\Tt(G)$ the subgroup of $\ldbrack G\rdbrack$ generated by all transpositions. The subgroup $\Tt(G)$  is the analog of the symmetric group. One has $\Tt(G)\subseteq \ker(I)$. Indeed, using Lemma 7.3 in \cite{M12} we have
\[I(\pi_{\hat{U}})=[\chi_{\hat{U}}]=[\chi_{U\sqcup U^{-1}\sqcup (G^{(0)}\setminus \text{supp}(\pi_{\hat{U}})}]=[\chi_U+\chi_{U^{-1}}]=0\in H_1(G).\]
 We say that $G$ has property TR if $\Tt(G)=\ker(I)$. In order to verify the AH conjecture for $G$, it suffices to establish property TR. In general, property TR implies the inclusion $\ker(I_{ab})\subseteq \text{im} (j)$. The converse holds if the commutator subgroup $D(\ldbrack G\rdbrack)$ is simple, since in this case $D(\ldbrack G\rdbrack)=\Aa(G)$, the analog of the alternating subgroup of $\Tt(G)$ defined in \cite{N19}. The group $D(\ldbrack G\rdbrack)$ is known to be simple for minimal groupoids which are either almost finite or purely infinite, see \cite{M15}.
\bigskip

We now specialize to the groupoid $\Gg(G,E)$ associated to a self-similar action $(G,E)$ such that $E^\infty$ is a Cantor set. Recall that $E$ is a finite graph with no sources  and $G^{(0)}=E^0$.
\begin{thm}
For a self-similar action $(G,E)$ such that $\Gg(G,E)$ is effective, we have $V_E(G)\cong \ldbrack \Gg(G,E)\rdbrack$. In particular, $\ldbrack \Gg_E\rdbrack\subseteq \ldbrack \Gg(G,E)\rdbrack$.
\end{thm}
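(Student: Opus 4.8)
The plan is to construct a group isomorphism $\Phi:V_E(G)\to\ldbrack\Gg(G,E)\rdbrack$ directly from the $G$-table data. Given a $G$-table $\tau$ with columns $(\alpha_i,g_i,\beta_i)$, recall that the elements $(\alpha_i,g_i,\beta_i)\in\Ss(G,E)$ produce compact open bisections $Z(\alpha_i,g_i,\beta_i)\subseteq\Gg(G,E)$ with $d(Z(\alpha_i,g_i,\beta_i))=Z(\beta_i)$ and $t(Z(\alpha_i,g_i,\beta_i))=Z(\alpha_i)$. First I would set $U_\tau:=\bigsqcup_{i=1}^m Z(\alpha_i,g_i,\beta_i)$ and check that $U_\tau$ is a full bisection: the defining condition of a $G$-table is precisely that $E^\infty=\bigsqcup_i Z(\beta_i)=\bigsqcup_i Z(\alpha_i)$, which via the identification $\Gg(G,E)^{(0)}\cong E^\infty$ gives $d(U_\tau)=t(U_\tau)=\Gg(G,E)^{(0)}$. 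The induced homeomorphism $\pi_{U_\tau}$ of $E^\infty$ sends $\beta_i\mu\mapsto\alpha_i(g_i\cdot\mu)$, which is exactly the definition of $\bar\tau$; so I would set $\Phi(\bar\tau):=\pi_{U_\tau}$ and note $\Phi(\bar\tau)=\bar\tau$ under the identification.

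Next I would verify that $\Phi$ is a well-defined homomorphism. Well-definedness requires that if two $G$-tables $\tau_1,\tau_2$ define the same homeomorphism $\bar\tau$, then $U_{\tau_1}=U_{\tau_2}$ as subsets of $\Gg(G,E)$; here the key tool is that two tables give the same homeomorphism iff they have a common split (up to column permutation), so it suffices to check that the bisection $U_\tau$ is invariant under the splitting operation. This is a direct computation: splitting the column $(\alpha,g,\beta)$ into columns $(\alpha f_j,g|_{e_j},\beta e_j)$ with $g\cdot e_j=f_j$ corresponds exactly to the decomposition $Z(\alpha,g,\beta)=\bigsqcup_j Z(\alpha f_j,g|_{e_j},\beta e_j)$ already recorded in the construction of $\Gg(G,E)$ (using $Z(\beta)=\bigsqcup_{e\in s(\beta)E^1}Z(\beta e)$). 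The homomorphism property $\Phi(\bar\tau_1\bar\tau_2)=\pi_{U_{\tau_1}}\circ\pi_{U_{\tau_2}}=\pi_{U_{\tau_1}U_{\tau_2}}$ follows from $\pi_U\circ\pi_V=\pi_{UV}$ together with the fact that the product table corresponds to the groupoid product of bisections; one may need to split $\tau_1,\tau_2$ to a common refinement so that the $\beta$-row of $\tau_1$ matches the $\alpha$-row of $\tau_2$ before multiplying, which is harmless by the splitting invariance just established.

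For injectivity, if $\Phi(\bar\tau)=\mathrm{id}_{E^\infty}$ then $\pi_{U_\tau}=\mathrm{id}$, forcing each $Z(\alpha_i,g_i,\beta_i)$ to lie in the unit space; since $\Gg(G,E)$ is effective (hence the interior of the isotropy is the unit space) and each such set is open, this gives $\alpha_i=\beta_i$ and $g_i=s(\beta_i)$, so $\bar\tau$ is the identity table. Surjectivity is the step I expect to be the main obstacle: given an arbitrary full bisection $U$, I must show $\pi_U$ arises from a $G$-table. Because $\Gg(G,E)$ is ample, $U$ is a disjoint union of basic bisections $Z(\alpha,g,\beta)$; using compactness of $U$ and the fact that $d(U)=t(U)=E^\infty$ is compact, I would extract a finite cover by such sets, then refine (splitting as above) until the $d$-images $\{Z(\beta_i)\}$ are pairwise disjoint and cover $E^\infty$, and simultaneously the $t$-images $\{Z(\alpha_i)\}$ are disjoint and cover $E^\infty$ — the latter disjointness following because $U$ is a bisection, so $t|_U$ is injective. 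The resulting finite list of triples $(\alpha_i,g_i,\beta_i)$ is exactly a $G$-table with $U_\tau=U$. Finally, the inclusion $\ldbrack\Gg_E\rdbrack\subseteq\ldbrack\Gg(G,E)\rdbrack$ is immediate: $\Gg_E$ is an open subgroupoid with the same unit space $E^\infty$, so every full bisection of $\Gg_E$ is a full bisection of $\Gg(G,E)$, and under the identification this matches the embedding $V_E\hookrightarrow V_E(G)$ sending $(\alpha_i,\beta_i)\mapsto(\alpha_i,s(\alpha_i),\beta_i)$ recorded earlier.
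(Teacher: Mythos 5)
Your proposal is correct and follows essentially the same route as the paper's proof: both send a $G$-table $\tau$ to the full bisection $U_\tau=\bigsqcup_i Z(\alpha_i,g_i,\beta_i)$ with $\pi_{U_\tau}=\bar\tau$, prove injectivity from effectiveness (an open subset of the isotropy bundle must lie in the unit space), and prove surjectivity by decomposing an arbitrary full bisection into finitely many basic bisections $Z(\alpha,g,\beta)$ that form a $G$-table. The only difference is that you spell out details the paper leaves implicit (splitting invariance for well-definedness, common refinements for the homomorphism property, and the compactness argument in the surjectivity step), which strengthens rather than changes the argument.
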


\begin{proof}
Indeed, given a unitary element $\ds \sum_{i=1}^mS_{\alpha_i}U_{g_i}S_{\beta_i}^*$ of $V_E(G)$ corresponding to the $G$-table $\ds \tau=\left(\begin{array}{cccc}  \alpha_1&\alpha_2&\cdots&\alpha_m\\g_1&g_2&\cdots &g_m\\\beta_1&\beta_2&\cdots&\beta_m\end{array}\right)$, this determines a full bisection $U=\bigsqcup_{i=1}^mZ(\alpha_i, g_i, \beta_i)$ of $\Gg(G,E)$ and an element $\pi_U=\bar{\tau}$ of $\ldbrack\Gg(G,E)\rdbrack$, so we get a map $V_E(G)\to \ldbrack\Gg(G,E)\rdbrack$. It is easy to see that this map is a group homomorphism. It is injective since if $\ds \sum_{i=1}^mS_{\alpha_i}U_{g_i}S_{\beta_i}^*$ determines the identity of $\Gg(G,E)^{(0)}=E^\infty$, then $\alpha_i=\beta_i$ and  $g_i=s(\alpha_i)$ for $1\le i\le m$, so \[ \sum_{i=1}^mS_{\alpha_i}U_{g_i}S_{\beta_i}^*=\sum _{v\in E^0}P_v=I.\] To show surjectivity, given $\pi\in\ldbrack\Gg(G,E)\rdbrack$, since $\Gg(G,E)$ is effective, there is a unique full bisection $U=\bigsqcup_{i=1}^mZ(\alpha_i,g_i,\beta_i)$ with $\pi=\pi_U$ which determines an element $\ds \sum_{i=1}^mS_{\alpha_i}U_{g_i}S_{\beta_i}^*$  of $V_E(G)$.
\end{proof}

\begin{cor} Assuming that $\Gg(G,E)$ is amenable,
we have an exact sequence
\[1\to U(C(E^\infty))\to N(C(E^\infty), C^*(G,E))\to  \ldbrack \Gg(G,E)\rdbrack\to 1\]
which splits,
where \[N(C(E^\infty), C^*(G,E))=\{u\in U(C^*(G,E))\;\mid \; uC(E^\infty)u^*=C(E^\infty)\}.\]
The splitting is given by $U\mapsto \chi_U$ where $U=\bigsqcup_{i=1}^mZ(\alpha_i, g_i, \beta_i)$.
\end{cor}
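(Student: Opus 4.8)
The plan is to obtain exactness as an immediate specialization of Matui's Proposition 5.6 (recorded in the Remark above), and then to exhibit the splitting as the faithful unitary representation of the Lemma, once we observe that this representation lands in the normalizer subgroup and induces the correct homeomorphisms of $E^\infty$.

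For exactness I would first identify the ambient algebras. Because $\Gg(G,E)$ is amenable, its full and reduced $C^*$-algebras coincide, so $C^*_r(\Gg(G,E))\cong C^*(\Gg(G,E))\cong C^*(G,E)$ via the isomorphism $\phi$ from Section 3; under this identification $C(\Gg(G,E)^{(0)})=C(E^\infty)$ and $N(C(E^\infty),C^*_r(\Gg(G,E)))=N(C(E^\infty),C^*(G,E))$. The groupoid $\Gg(G,E)$ is second countable, ample, and (by the standing hypothesis of the preceding Theorem) effective, with unit space the Cantor set $E^\infty$, so Proposition 5.6 of \cite{M12} applies verbatim and yields the short exact sequence
\[1\to U(C(E^\infty))\to N(C(E^\infty),C^*(G,E))\to\ldbrack\Gg(G,E)\rdbrack\to 1,\]
whose quotient map $q$ sends a normalizer $u$ to the homeomorphism of $E^\infty$ determined by $f\mapsto ufu^*$ on $C(E^\infty)$.

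For the splitting I would use that the preceding Theorem realizes $\ldbrack\Gg(G,E)\rdbrack$ as $V_E(G)$, acting as homeomorphisms $\bar\tau$ of $E^\infty$ with $\bar\tau=\pi_U$ for $U=\bigsqcup_{i=1}^m Z(\alpha_i,g_i,\beta_i)$. The Lemma then supplies a faithful group homomorphism $s\colon\ldbrack\Gg(G,E)\rdbrack\to U(C^*(G,E))$, $\pi_U\mapsto T=\sum_{i=1}^m S_{\alpha_i}U_{g_i}S_{\beta_i}^*$, and under $\phi$ one has $T=\chi_U$; note $s$ is well defined because effectiveness forces the full bisection $U$ with $\pi=\pi_U$ to be unique. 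Everything then reduces to the single identity
\[T\,f\,T^*=f\circ\bar\tau^{-1}\qquad(f\in C(E^\infty)),\]
since this simultaneously shows $T\in N(C(E^\infty),C^*(G,E))$ and that the homeomorphism implemented by $T$ is $\bar\tau=\pi_U$, whence $q\circ s=\mathrm{id}$ and $s(\pi_U)=\chi_U$ as claimed.

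I expect the proof of this identity to be the main obstacle, though it is routine: it is the $G$-equivariant analog of the Remark in Section 4 establishing $V_E\subseteq N(C(E^\infty),C^*(E))$. For a basic projection $f=\chi_{Z(\eta)}=S_\eta S_\eta^*$ I would expand $TfT^*$ and use the Cuntz-Krieger relations for $S_{\beta_i}^*S_\eta$ together with $U_gS_e=S_{g\cdot e}U_{g|_e}$ from Theorem \ref{gen}. The only delicate point is bookkeeping the restrictions $g_i|_{\,\cdot\,}$ against the prefix relations between $\eta$ and the $\beta_i$, which is handled by first passing to a split of $\tau$ so that each $\beta_i$ is comparable with $\eta$; one then reads off $TfT^*=\chi_{\bar\tau(Z(\eta))}=\chi_{Z(\eta)}\circ\bar\tau^{-1}$. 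Since such projections generate $C(E^\infty)$, the identity follows for all $f$, completing the construction of the splitting $U\mapsto\chi_U$.
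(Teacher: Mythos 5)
Your proposal is correct and follows exactly the route the paper intends: the exact sequence comes from Matui's Proposition 5.6 in \cite{M12} (quoted in the Remark) applied to the second countable, ample, effective groupoid $\Gg(G,E)$, with amenability identifying $C^*_r(\Gg(G,E))\cong C^*(G,E)$, and the splitting is the faithful unitary representation $\pi_U\mapsto\sum_i S_{\alpha_i}U_{g_i}S_{\beta_i}^*=\chi_U$ from the Lemma, verified to normalize $C(E^\infty)$ and induce $\bar\tau$. Your splitting-the-table argument for $T\chi_{Z(\eta)}T^*=\chi_{Z(\eta)}\circ\bar\tau^{-1}$ is precisely the $G$-equivariant version of the computation in the Remark of Section 4, so there is nothing to add.
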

\noindent{\bf  Questions}. When is $\ldbrack\Gg(G,E)\rdbrack$ finitely generated? When is of type $F_\infty$? When does it have the Haagerup property?

\bigskip

\begin{rmk}
For a pseudo free self-similar action $(G,E)$ with Exel-Pardo groupoid $\Gg(G,E)$, there is a cocycle $\rho: \Gg(G,E)\to \ZZ$ given by $[\alpha, g, \beta; \xi]\mapsto |\alpha|-|\beta|$ with kernel
\[\Hh(G,E)=\{[\alpha, g, \beta; \xi]\in \Gg(G,E) :|\alpha|=|\beta|\}.\] 
The cocyle is well-defined since it does not depend on germ representatives.
Now $\Hh(G,E)=\bigcup_{k\ge 0} \Hh_k(G,E)$ where
\[\Hh_k(G,E)=\{[\alpha, g, \beta; \xi]\in \Gg(G,E) :|\alpha|=|\beta|=k\}.\]
There are groupoid homomorphisms \[\omega_k:\Hh_k(G,E)\to G,\; \omega_k([\alpha,g,\beta;\xi])=g\] and $\ker\omega_k$ is $AF$ for all $k\ge 0$.

\end{rmk}

\begin{proof}
 Indeed, consider $R_k$ the equivalence relation on $E^k$ such that $(\alpha, \beta)\in R_k$ if there is $g\in G$ with $g\cdot s(\beta)=s(\alpha)$. Then the map $[\alpha, g, \beta;\xi]\mapsto ((\xi, g),(\alpha, \beta))$ gives an isomorphism between $\Hh_k(G,E)$ and $(E^\infty\rtimes G)\times R_k$, so $\ker\omega_k$ is isomorphic to $E^\infty\times R_k$. 
Since the groupoids $\Hh_k(G,E)\times_{\omega_k}G$ and $\ker \omega_k$ are similar, we obtain \[H_i(\Hh_k(G,E)\times_{\omega_k}G)\cong H_i(\ker \omega_k)=0\] for $i\ge 1$. 
\end{proof}
\begin{cor}
We have 
 \[ H_i(\Hh(G,E))=\varinjlim H_i(\Hh_k(G,E)),\] where the inclusion maps are given by
\[\chi_{Z(\alpha, g,\beta;Z(\beta\gamma))}\mapsto \chi_{Z(\alpha(g\cdot \gamma),g|_\gamma,\beta\gamma;Z(\beta\gamma))}.\]
Using the notation from a previous section, we have \[C^*(G,E)^{\TT}=\Ff(G,E)\cong C^*(\Hh(G,E)),\;\;\Ff_k\cong C^*(\Hh_k(G,E)).\] We also have
\[\bigcup_{k\ge 0}\ldbrack \Hh_k(G,E)\rdbrack=\ldbrack \Hh(G,E)\rdbrack\subseteq \ldbrack \Gg(G,E)\rdbrack\]
by considering $G$-tables where $|\alpha_i|=|\beta_i|$.
\end{cor}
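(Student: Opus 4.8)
The plan is to deduce all three assertions from one structural fact: the groupoids $\Hh_k(G,E)$ form an increasing chain of open subgroupoids of $\Gg(G,E)$, each with unit space $E^\infty$, whose union is $\Hh(G,E)$. The inclusion $\Hh_k(G,E)\hookrightarrow \Hh_{k+1}(G,E)$ is furnished by the germ identity $[\alpha,g,\beta;\beta\gamma\eta]=[\alpha(g\cdot\gamma),g|_\gamma,\beta\gamma;\beta\gamma\eta]$ already recorded in the construction of $\Gg(G,E)$, which rewrites a level-$k$ germ by a level-$(k+1)$ representative. On basic compact open bisections one has $Z(\alpha,g,\beta)=\bigsqcup_{\gamma\in s(\beta)E^1}Z(\alpha(g\cdot\gamma),g|_\gamma,\beta\gamma)$, so at the level of $C_c$-functions the generator $\chi_{Z(\alpha,g,\beta)}$ refines to $\sum_{\gamma}\chi_{Z(\alpha(g\cdot\gamma),g|_\gamma,\beta\gamma)}$; since the two sets $Z(\alpha,g,\beta;Z(\beta\gamma))$ and $Z(\alpha(g\cdot\gamma),g|_\gamma,\beta\gamma;Z(\beta\gamma))$ coincide by the germ identity, restriction to a single piece $Z(\beta\gamma)$ gives exactly the stated connecting map. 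I would verify openness of each $\Hh_k(G,E)$ in $\Hh_{k+1}(G,E)$ directly from the basis $\{Z(\alpha,g,\beta)\}$ for the topology of $\Gg(G,E)$, and note that each $\Hh_k(G,E)$ inherits effectiveness from $\Gg(G,E)$, being open and containing the unit space.

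For the homology statement I would invoke continuity of groupoid homology under increasing unions of open subgroupoids. Concretely, $C_c(\Hh(G,E)^{(n)},\ZZ)=\varinjlim_k C_c(\Hh_k(G,E)^{(n)},\ZZ)$ for each $n$, the face maps and the boundary maps $\delta_n$ are compatible with the inclusions above, and homology of a chain complex commutes with direct limits; hence $H_i(\Hh(G,E))=\varinjlim_k H_i(\Hh_k(G,E))$ with the connecting maps as described.

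For the $C^*$-algebraic identifications I would use the isomorphism $C^*(G,E)\cong C^*(\Gg(G,E))$ together with the standard fact that, for a continuous $\ZZ$-valued cocycle, the fixed-point algebra of the associated gauge $\TT$-action equals the $C^*$-algebra of the kernel of the cocycle. Since $\rho$ has kernel $\Hh(G,E)$ and its gauge action corresponds to $\gamma$, this yields $C^*(G,E)^{\TT}=\Ff(G,E)\cong C^*(\Hh(G,E))$. Matching filtrations levelwise — the generators $S_\mu U_g S_\nu^*$ spanning $\Ff_k$ correspond to the bisections $Z(\mu,g,\nu)$ with $|\mu|=|\nu|=k$, which generate $\Hh_k(G,E)$ — gives $\Ff_k\cong C^*(\Hh_k(G,E))$, and $\Ff(G,E)=\varinjlim\Ff_k$ recovers the limit on the groupoid side.

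Finally, for the full groups, $\ldbrack\Hh(G,E)\rdbrack\subseteq\ldbrack\Gg(G,E)\rdbrack$ and $\ldbrack\Hh_k(G,E)\rdbrack\subseteq\ldbrack\Hh(G,E)\rdbrack$ hold because these are open subgroupoids sharing the unit space $E^\infty$, so a full bisection of the smaller is a full bisection of the larger. For the reverse inclusion I would take $\pi=\pi_U\in\ldbrack\Hh(G,E)\rdbrack$; by compactness of $E^\infty$ the full bisection $U$ is a finite disjoint union $\bigsqcup_{i=1}^m Z(\alpha_i,g_i,\beta_i)$ with $|\alpha_i|=|\beta_i|$, i.e. a $G$-table with $|\alpha_i|=|\beta_i|$. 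Splitting each column up to the common level $k=\max_i|\alpha_i|$ — which leaves $\bar{\tau}=\pi_U$ unchanged — produces a $G$-table all of whose entries have length $k$, so $\pi\in\ldbrack\Hh_k(G,E)\rdbrack$. The main obstacle I expect is the first paragraph: making the chain $\Hh_k(G,E)\subseteq\Hh_{k+1}(G,E)$ and the exhaustion $\Hh(G,E)=\bigcup_k\Hh_k(G,E)$ precise at the level of groupoids rather than germ representatives, and confirming that the induced maps on $C_c$ are exactly the stated refinement maps; once this is in place, the homology continuity, the fixed-point/kernel identification, and the splitting argument for full groups are all routine.
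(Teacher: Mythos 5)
Your proposal is correct and follows exactly the route the paper intends: the paper states this corollary without proof, as an immediate consequence of the preceding remark that $\Hh(G,E)=\bigcup_{k\ge 0}\Hh_k(G,E)$ is an increasing union of open subgroupoids with common unit space $E^\infty$, together with continuity of groupoid homology under such unions, the identification of the gauge-action fixed-point algebra with the $C^*$-algebra of the kernel of the cocycle $\rho$, and the splitting of $G$-tables with $|\alpha_i|=|\beta_i|$ to a common level. Your filling-in of these standard facts (the germ identity giving the inclusions $\Hh_k\subseteq\Hh_{k+1}$, the refinement of $\chi_{Z(\alpha,g,\beta)}$, the matching of $\Ff_k$ with $C^*(\Hh_k(G,E))$, and the compactness/splitting argument for the full groups) is accurate and complete.
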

\begin{rmk}
Assuming that the groupoid $G$ is transitive, it follows that $\Gg(G,E)$ is minimal. Then $G$ is Morita equivalent to the group $G_u^u$ for some $u\in E^0$, and $\Hh_k(G,E)$ is Morita equivalent to $E^\infty\rtimes G$. We deduce that
\[H_q(\Hh_k(G,E))\cong H_q(E^\infty\rtimes G)\cong H_q(G,C(E^\infty,\ZZ))\cong H_q(G_u^u, C(uE^\infty,\ZZ))\]
and 
\[H_q(\Hh(G,E))\cong\varinjlim H_q(G_u^u, C(uE^\infty,\ZZ)).\]

Recall that the boundary map $\delta_1:C_c(\Hh(G,E))\to C_c(\Hh(G,E)^{(0)})$ is given by
\[\delta_1(\chi_{Z(\alpha, g, \beta)})= \chi_{Z(\beta, s(\beta),\beta)}-\chi_{Z(\alpha, s(\alpha),\alpha)}\]
for $\alpha, \beta\in E^*$ with $|\alpha|=|\beta|$ and $g\cdot s(\beta)=s(\alpha)$. It follows that 
\[[\chi_{Z(\alpha, s(\alpha), \alpha)}]=[\chi_{Z(\beta, s(\beta), \beta)}]\in H_0(\Hh(G,E))\]
whenever $(\alpha, \beta)\in R_k$ for some $k$. Hence for  $G$  transitive, the group $H_0(\Hh(G,E))$ is generated by $[\chi_{Z(\alpha, u, \alpha)}]$ where $\alpha\in E^*u$.

The boundary map $\delta_2:C_c(\Hh(G,E)^{(2)})\to C_c(\Hh(G,E))$ is given by
\[\delta_2(\chi_{Z(\alpha, g,\beta)\times Z(\beta, h, \gamma)})= \chi_{Z(\beta, h,\gamma)}-\chi_{Z(\alpha, gh,\gamma)}+ \chi_{Z(\alpha, g,\beta)}\]
for $\alpha, \beta, \gamma\in E^*$ with $|\alpha|=|\beta|=|\gamma|$ and $g\cdot s(\beta)=s(\alpha)$, $h\cdot s(\gamma)=s(\beta)$. It follows that in $H_1(\Hh(G,E))=\ker \delta_1/\text{im } \delta_2$ we have
\[[\chi_{Z(\alpha, g,\beta)}]+[\chi_{Z(\beta,h,\gamma)}]=[\chi_{Z(\alpha,gh,\gamma)}].\]
Taking $h=d(g)=s(\beta)$ and $\gamma=\beta$ we get
\[[\chi_{Z(\alpha, g,\beta)}]=[\chi_{Z(\alpha, g,\beta)}]+[\chi_{Z(\beta, s(\beta), \beta)}],\]
so $[\chi_{Z(\beta, s(\beta),\beta)}]=0$ in $H_1(\Hh(G,E))$. 
Taking $h=g^{-1}$ and $\beta=\gamma$ we get
\[[\chi_{Z(\alpha, g,\beta)}]=-[\chi_{Z(\beta,g^{-1},\alpha)}].\]
If $s(\alpha)=s(\beta)=u$ and $|\alpha|=|\beta|$, we obtain for $g\in G_u^u$
\[[\chi_{Z(\alpha, g,\beta)}]=[\chi_{Z(\alpha, g,\alpha)}]+[\chi_{Z(\alpha, u,\beta)}],\]
\[[\chi_{Z(\beta, g,\alpha)}]=[\chi_{Z(\beta, g,\beta)}]+[\chi_{Z(\beta, u,\alpha)}],\]
so
\[[\chi_{Z(\alpha, g,\alpha)}]=[\chi_{Z(\alpha, g,\beta)}]-[\chi_{Z(\alpha, u,\beta)}]=-[\chi_{Z(\beta, g^{-1},\alpha)}]+[\chi_{Z(\beta, u,\alpha)}]=\]\[=-[\chi_{Z(\beta, g^{-1},\beta)}]-[\chi_{Z(\beta, u,\alpha)}]+[\chi_{Z(\beta, u,\alpha)}]=[\chi_{Z(\beta, g,\beta)}].\]
For $g,h\in G_{s(\alpha)}^{s(\alpha)}$ we have
\[[\chi_{Z(\alpha, g, \alpha)}]+[\chi_{Z(\alpha, h, \alpha)}]=[\chi_{Z(\alpha, gh, \alpha)}].\]

 \end{rmk}
\begin{prop}
Let $(G,E)$ be a self-similar action with $G$ transitive such that $\Gg(G,E)$ is  effective. If the homology group  $H_1(\Hh(G,E))$ is spanned by $[\chi_{Z(\alpha, g, \alpha)}]$ where $g\in G_{s(\alpha)}^{s(\alpha)}$, then the index map $I_\Hh:\ldbrack\Hh(G,E)\rdbrack\to H_1(\Hh(G,E))$ is surjective.
\end{prop}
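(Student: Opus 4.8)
The plan is to use the hypothesis as directly as possible. Since $I_\Hh$ is a group homomorphism, its image is a subgroup of $H_1(\Hh(G,E))$; and since by assumption the classes $[\chi_{Z(\alpha, g, \alpha)}]$ with $g\in G_{s(\alpha)}^{s(\alpha)}$ span $H_1(\Hh(G,E))$, it suffices to show that each such generator lies in the image of $I_\Hh$. So the whole argument reduces to a single realization step: exhibit, for every generator, a full bisection $U$ of $\Hh(G,E)$ with $I_\Hh(\pi_U)=[\chi_{Z(\alpha,g,\alpha)}]$.

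First I would fix a generator and build the bisection by extending by the identity off of $Z(\alpha)$. The compact open bisection $Z(\alpha, g, \alpha)\subseteq \Hh(G,E)$ satisfies $d(Z(\alpha,g,\alpha))=t(Z(\alpha,g,\alpha))=Z(\alpha)$, since $g$ lies in the isotropy group $G_{s(\alpha)}^{s(\alpha)}$ and hence its germs move points only within $Z(\alpha)$. Writing the complement as a finite disjoint union of cylinders, $E^\infty\setminus Z(\alpha)=\bigsqcup_j Z(\gamma_j)$, I set
\[
U=Z(\alpha, g, \alpha)\sqcup \bigsqcup_j Z(\gamma_j, s(\gamma_j), \gamma_j),
\]
the given bisection on $Z(\alpha)$ and the identity germs elsewhere. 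Then $d(U)=t(U)=E^\infty$, every column has equal path lengths so $U\subseteq \Hh(G,E)$, and $U$ is a full bisection; thus $\pi_U\in \ldbrack \Hh(G,E)\rdbrack$.

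Next I would compute the index. By definition
\[
I_\Hh(\pi_U)=[\chi_U]=[\chi_{Z(\alpha, g, \alpha)}]+\sum_j [\chi_{Z(\gamma_j, s(\gamma_j), \gamma_j)}]
\]
in $H_1(\Hh(G,E))$. Here I invoke the computation recorded in the preceding remark, namely that $[\chi_{Z(\beta, s(\beta), \beta)}]=0$ for every $\beta$ (the identity germs are null-homologous, obtained by taking $h=s(\beta)$ and $\gamma=\beta$ in the $\delta_2$-relation). Hence every identity summand vanishes and $I_\Hh(\pi_U)=[\chi_{Z(\alpha, g, \alpha)}]$, exactly the chosen generator. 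Since the generators span and $I_\Hh$ is a homomorphism, the image of $I_\Hh$ contains a spanning set of $H_1(\Hh(G,E))$, and therefore $I_\Hh$ is surjective.

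The individual steps are routine; the points needing care are the prerequisites that make $I_\Hh$ well defined — that $\Hh(G,E)$ is effective (which I would deduce from the effectiveness of $\Gg(G,E)$, since $\Hh(G,E)=\rho^{-1}(0)$ is an open subgroupoid with unit space $E^\infty$, so its isotropy sits inside that of $\Gg(G,E)$ and has interior exactly $E^\infty$) and that $I_\Hh$ is a homomorphism with subgroup image. The genuine content is the realization step, namely confirming that extending $Z(\alpha, g, \alpha)$ by the identity introduces no spurious contribution to the homology class; this is precisely what the vanishing of $[\chi_{Z(\beta, s(\beta), \beta)}]$ guarantees.
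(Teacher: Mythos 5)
Your proof is correct and takes essentially the same approach as the paper: both arguments build the full bisection $U = Z(\alpha,g,\alpha)\sqcup(E^\infty\setminus Z(\alpha))$ (your identity germs $Z(\gamma_j,s(\gamma_j),\gamma_j)$ over the complementary cylinders are exactly the complement viewed inside the unit space) and show $I_\Hh(\pi_U)=[\chi_{Z(\alpha,g,\alpha)}]$, whence surjectivity follows from the spanning hypothesis. The only cosmetic difference is that the paper justifies the index computation by citing Lemma 7.3 of Matui, while you derive it directly from the vanishing $[\chi_{Z(\beta,s(\beta),\beta)}]=0$ of identity-germ classes established in the remark preceding the proposition.
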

\begin{proof}
Let $\alpha\in E^*$ and consider the bisection $V=Z(\alpha, g,\alpha)$ where $g\in G_{s(\alpha)}^{s(\alpha)}$. Since $d(V)=t(V)=Z(\alpha)$, we can define a full bisection $U=V\sqcup (E^\infty\setminus Z(\alpha))\subseteq \Hh(G,E)$ and we have $I_\Hh(\pi_U)=[\chi_V]$ by using Lemma 7.3 in \cite{M12}. The result follows since the elements $[\chi_V]$ span $H_1(\Hh(G,E))$.
\end{proof}

The  AH conjecture was verified  for the Katsura-Exel-Pardo groupoids by P. Nyland and E. Ortega in \cite{NO2}. The groupoid $\Gg(G,E)$ obtained from Example \ref{ka} is one the these. One of the main ingredients of the proof is to show the property TR by using a decomposition \[\ldbrack\Gg(G,E)\rdbrack=\ldbrack\Hh(G,E)\rdbrack\ldbrack\Gg_E\rdbrack.\]
\begin{prop}
The groupoid $\Gg(G,E)$ obtained  from the self-similar action in Example \ref{forest} does not satisfy the property TR.
\end{prop}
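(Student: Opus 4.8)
The plan is to prove that $\Tt(\Gg)\subsetneq\ker I$, where I abbreviate $\Gg:=\Gg(G,E)$ and $I:\ldbrack\Gg\rdbrack\to H_1(\Gg)$ is the index map. First I would record the reduction that is available because $\Gg$ is Hausdorff, minimal, effective and purely infinite: then $D(\ldbrack\Gg\rdbrack)$ is simple and equals $\Aa(\Gg)$, so $D(\ldbrack\Gg\rdbrack)\subseteq\Tt(\Gg)\subseteq\ker I$, and $I$ is surjective. By the criterion recalled in this section (valid since $D(\ldbrack\Gg\rdbrack)$ is simple), property TR is equivalent to $\ker(I_{ab})=\mathrm{im}(j)$. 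Since $\mathrm{im}(j)\subseteq\ker(I_{ab})$ always holds, it suffices to exhibit a class in $\ker(I_{ab})$ that is \emph{not} in $\mathrm{im}(j)$. Because $\mathrm{im}(j)$ is a quotient of $H_0(\Gg)\otimes\ZZ_2$ it is annihilated by $2$, so any element of $\ker(I_{ab})$ of odd or infinite order will do.

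Second, I would build the natural $\ZZ$-valued invariant and explain why it is insufficient, as this dictates the shape of the rest of the argument. Define $\psi\colon G\to\ZZ$ on generators by $\psi(a)=\psi(b)=1$, $\psi(c)=0$; since $G$ is transitive with free isotropy $\ZZ=\langle z\rangle$, $z=a^{-1}cba$, and the assigned values are consistent with this structure, $\psi$ extends to a groupoid homomorphism, and a one-line check on $a,b,c$ gives the restriction-sum identity $\psi(g)=\sum_{e\in d(g)E^1}\psi(g|_e)$. This identity makes $\Phi(\tau):=\sum_i\psi(g_i)$ a well-defined homomorphism $V_E(G)=\ldbrack\Gg\rdbrack\to\ZZ$, invariant under splitting and column permutation. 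The very same identity shows that $\chi_{Z(\alpha,g,\beta)}\mapsto\psi(g)$ annihilates $\mathrm{im}(\delta_2)$, so it descends to $\bar\Psi\colon H_1(\Gg)\to\ZZ$ with $\Phi=\bar\Psi\circ I$. Hence $\bar\Psi\big([\chi_{Z(u,z,u)}]\big)=\psi(z)=1$, proving the isotropy class has infinite order and is \emph{seen} by $I$, while $\Phi$ vanishes on $\ker I$. The moral is that no homomorphism of this type can detect the failure of TR: the witness must be a torsion class in $\ldbrack\Gg\rdbrack_{ab}$.

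Third, I would locate the witness through homology. Using $\Gg\cong\Hh\rtimes_\sigma\ZZ$ together with the Pimsner--Voiculescu sequence and the transitive-$G$ identification $H_*(\Hh)\cong\varinjlim H_*(\ZZ,C(uE^\infty,\ZZ))$, the decisive structural feature of this example is that the isotropy generator acts as a \emph{twisted dyadic odometer}: at $w$ the element $bc$ interchanges $e_4$ and $e_5$ with $(bc)|_{e_4}=v$ and $(bc)|_{e_5}=cb$, the usual carry. Consequently $H_0(\Hh)$, and hence $H_0(\Gg)$, is $2$-divisible on this odometer part, forcing $\mathrm{im}(j)$ to be small, so TR collapses to the non-injectivity of $I_{ab}$, i.e. to $\ldbrack\Gg\rdbrack_{ab}\neq H_1(\Gg)$. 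I would then write down an explicit $G$-table $w$ assembled from this odometer and a graph transposition --- for instance combining $bc$ on $Z(w)=Z(e_4)\sqcup Z(e_5)$ with the untwisted swap of $Z(e_4)$ and $Z(e_5)$ --- verify $I(w)=0$, and argue that $[w]$ is a nonzero torsion class outside $\mathrm{im}(j)$.

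The hard part is exactly this final step: certifying $[w]\neq 0$ in $\ldbrack\Gg\rdbrack_{ab}$ and $[w]\notin\mathrm{im}(j)$. Because $\Phi$ factors through $I$, no $\ZZ$-valued invariant can separate $w$ from $\Tt(\Gg)$, so one cannot finish by a counting argument; instead one must compute enough of $H_0(\Gg)$, $H_1(\Gg)$ and of the abelianization $\ldbrack\Gg\rdbrack_{ab}$ to exhibit the torsion discrepancy produced by the dyadic odometer. Controlling the direct limit defining $H_*(\Hh)$ and the map $1-\sigma_*$, and then matching the resulting torsion against the transposition image $\mathrm{im}(j)$, is where the genuine work lies; everything preceding it is formal.
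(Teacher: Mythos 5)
Your outline is not a proof, and the gap is not only the one you acknowledge at the end; your proposed witness already fails your own test. Composing the action of $bc$ on $Z(w)$ with the untwisted swap of $Z(e_4)$ and $Z(e_5)$ gives (in one order of composition) the element acting by $e_4\mu\mapsto e_4(cb\cdot\mu)$ on $Z(e_4)$ and trivially elsewhere, i.e. the full bisection $U=Z(e_4,cb,e_4)\sqcup Z(e_5,v,e_5)\sqcup Z(u,u,u)\sqcup Z(v,v,v)$. Then $\bar\Psi(I(\pi_U))=\psi(cb)=\psi(c)+\psi(b)=1\neq 0$, so $I(\pi_U)\neq 0$: the step ``verify $I(w)=0$'' is impossible for this $w$, and it is ruled out precisely by the homomorphism $\Phi=\bar\Psi\circ I$ you constructed in your second paragraph (the swap contributes nothing, being a transposition). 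Any admissible witness must lie in $\ker\Phi$ (for instance a localized $cb$ paired with a localized $(cb)^{-1}$ over disjoint cylinders), and for such elements the nontriviality in $\ldbrack\Gg(G,E)\rdbrack_{ab}$ --- which you correctly identify as the hard part --- is left completely open. So the proposal, as it stands, proves nothing.

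For comparison, the paper's proof is entirely different and much more concrete: it quotes from \cite{D} that $H_1(\Gg(G,E))\cong 0$, so that $\ker I$ is all of $\ldbrack\Gg(G,E)\rdbrack$, then takes the explicit $G$-table $\tau$ with entries $a^{-1},cb,cb,a$ and three explicit transpositions $\tau_1,\tau_2,\tau_3$, computes that $\tau_3\tau_2\tau_1\tau$ is the table acting by $(a^{-1}cba)^2$ on $Z(u)$ and trivially elsewhere, and concludes that $\tau$ is not a product of transpositions --- a point your $\Phi$ proves cleanly, since $\Phi$ kills every transposition but equals $2$ on that table. You should be aware, however, of a serious tension created by your second paragraph, which I cannot fault: $G$ is free on $a,b,c$, the identity $\psi(g)=\sum_{e\in d(g)E^1}\psi(g|_e)$ holds on the generators ($1=0+1$, $1=1+0$, $0=-1+1$) and propagates to all of $G$, and $\Psi$ respects refinements and kills $\mathrm{im}\,\delta_2$; hence $\bar\Psi:H_1(\Gg(G,E))\to\ZZ$ is a well-defined surjection, in direct contradiction with the quoted $H_1(\Gg(G,E))\cong 0$. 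If your functional is right, then the paper's own witness satisfies $\bar\Psi(I(\bar\tau))=2\neq 0$, hence $\tau\notin\ker I$, and the paper's argument does not establish the failure of TR either; moreover, since $H_0(\Gg(G,E))$ is then $2$-divisible, property TR becomes equivalent to injectivity of $I_{ab}$, i.e. to the AH conjecture for this groupoid, so any correct disproof of TR would amount to a counterexample to AH --- a far stronger claim than either your outline or the paper acknowledges. In short: your reduction and your $\ZZ$-valued invariant are sound and genuinely clarifying, but your witness is wrong, your key step is missing, and the conflict between your $\bar\Psi$ and the premise $H_1(\Gg(G,E))\cong 0$ must be resolved before either argument can stand.
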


\begin{proof}
Recall that from \cite{D} it follows that $H_1(\Gg(G,E))\cong 0$, so the kernel of the index map $I: \ldbrack\Gg(G,E)\rdbrack\to H_1(\Gg(G,E))$ is the whole group $\ldbrack\Gg(G,E)\rdbrack$.
Consider the $G$-table 
\[\tau=\left(\begin{array}{cccc} u & v& e_4&e_5\\a^{-1}&cb&cb&a\\e_4&e_5&v&u\end{array}\right)\]
which determines an element $\bar{\tau}\in \ldbrack\Gg(G,E)\rdbrack$. Consider the transpositions
\[\tau_1=\left(\begin{array}{cccc} u & e_4&e_5 &v\\a^{-1}&a&v&v\\e_4&u&e_5&v\end{array}\right), \tau_2=\left(\begin{array}{cccc}v &e_5& e_4&u\\cb&b^{-1}c^{-1}&v&u\\e_5&v&e_4&u\end{array}\right),\]\[ \tau_3=\left(\begin{array}{ccc} u & v&w\\a^{-1}cb&b^{-1}c^{-1}a&w\\v&u&w\end{array}\right).\]
A computation shows that 
\[\tau_3\tau_2\tau_1\tau=\left(\begin{array}{cccc} e_4 &e_5& v&u\\v&v&v&(a^{-1}cba)^2\\e_4&e_5&v&u\end{array}\right),\]
which is not a product of transpositions, since $G_u^u\cong \ZZ$. In particular, $\tau$ is not a product of transpositions.
\end{proof}
\begin{rmk}
The groupoid $\Gg(G,E)$ obtained  from the self-similar action in Example \ref{forest} satisfies the HK conjecture, see \cite{D}. The AH-conjecture may well be true for this groupoid, but the technique of proof will be different from the one in \cite{NO2}. In particular, we would obtain that $\ldbrack\Gg(G,E)\rdbrack_{ab}=0$ and that $\ldbrack\Gg(G,E)\rdbrack$ is perfect.
\end{rmk}

\bigskip

\bigskip


\begin{thebibliography}{0000}

\bigskip




\bibitem{CM} M. Crainic and I. Moerdijk, {\em A homology theory for \'etale groupoids}, J. Reine Angew.
Math. 521 (2000), 25--46. 

\bibitem{D} V. Deaconu, {\em On  groupoids and $C^*$-algebras from  self-similar actions},  New York J. of Math. 27 (2021), 923--942. 

\bibitem{De} V. Deaconu, {\em Groupoid actions on  $C^*$-correspondences}, New York J. of Math. 24 (2018), 1020--1038.


\bibitem{DGMW} R. J. Deeley, M. Goffeng, B. Mesland, M.F.  Whittaker, {\em Wieler solenoids, Cuntz-Pimsner algebras and $K$-theory}, Ergodic Theory Dynam. Systems 38 (2018), no. 8, 2942--2988.

 
 \bibitem{EP} R. Exel, E. Pardo, {\em Self-Similar graphs: a unified treatment of Katsura and Nekrashevych algebras}, Adv. Math. 306(2017), 1046--1129.
 
 \bibitem{GPS}    T. Giordano, I. F. Putnam and C. F. Skau, {\em Full groups of Cantor minimal systems}, Israel J. Math. 111 (1999), 285--320.
    \bibitem{LRRW} M. Laca, I. Raeburn, J. Ramagge, M. F. Whittaker, {\em Equilibrium states on operator algebras associated to self-similar actions of groupoids on graphs},  Adv. Math. 331 (2018), 268--325.
    
         

 
\bibitem{MM} K. Matsumoto, H.  Matui, {\em Full groups of Cuntz-Krieger algebras and Higman-Thompson groups}. Groups Geom. Dyn. 11 (2017), no. 2, 499--531.

\bibitem{M06} H. Matui, {\em Some remarks on topological full groups of Cantor minimal systems}, International J. Math. vol 17 no.2 (2006) 231--251.

\bibitem{M12} H. Matui, {\em Homology and topological full groups of \'etale groupoids on totally disconnected spaces}, Proc. London Math. Soc. (3) 104 (2012),  27--56.


\bibitem{M15} H. Matui, {\em Topological full groups of one-sided shifts of finite type}, J. Reine Angew.
Math. 705 (2015),  35--84.

\bibitem{M17} H. Matui, {\em Topological full groups of \'etale groupoids}, Operator algebras and applications--the Abel Symposium 2015, 203--230, Springer 2017.

\bibitem{N04} V. V. Nekrashevych, {\em Cuntz-Pimsner algebras of group actions}, J. Operator Theory, 52 (2004),  223--249.
 
 \bibitem{N} V. Nekrashevych, {\em Self-similar groups}, Math. Surveys Monogr. 117, AMS Providence 2005.

\bibitem{N09} V. Nekrashevych, {\em $C^*$-algebras and self-similar groups}, J. Reine Angew. Math. 630 (2009) 59--123.

\bibitem{N19} V. Nekrashevych, {\em Simple groups of dynamical origin}, Ergodic Theory Dynam. Systems, 39(3): 707--732, 2019.

\bibitem{NO} P. Nyland, E. Ortega, {\em Topological full groups of ample groupoids with applications to graph algebras}, 
Internat. J. Math. 30 (2019), no. 4.

\bibitem{NO2} P. Nyland, E. Ortega, {\em Katsura-Exel-Pardo groupoids and the AH conjecture}, arXiv: 2007.06638v1 [math.OA].

\bibitem{O20} E. Ortega, {\em The homology of the Katsura-Exel-Pardo groupoid}, J. Noncommut. Geom. 14(2020), 913--935.
   

\end{thebibliography}
\end{document}